\theoremstyle{plain}
\newtheorem{theorem}{Theorem}
\newtheorem{corollary}[theorem]{Corollary}
\newtheorem*{qqq}{Theorem~\ref{HTplus}}
\newtheorem*{ppp}{Theorem~\ref{FG}}
\theoremstyle{definition}
\title{\bf Several combinatorial results generalized from one large subset of semigroups to infinitely many}
\date{}
\author{Teng Zhang
        \footnote{School of Science, Zhejiang University of Science and
Technology, Liuhe Road, Hangzhou, 310023, Zhejiang, China\hfill\break
{\tt zhangteng@zust.edu.cn}\hfill\break
{Keywords: finite sums, Hales-Jewett Theorem, the Central Set Theorem, Ramsey Theory \hfill\break
MSC Classification: 05D10, 54D80, 03E05}}
}
\begin{document}
\maketitle

\begin{abstract}
  In 2015, Phulara established a generalization of the famous central set theorem by an original idea. Roughly speaking, this idea extends a combinatorial result from one large subset of the given semigroup to countably many. In this paper, we apply this idea to other combinatorial results to obtain corresponding generalizations, and do some further investigation. Moreover, we find that Phulara's generalization can be generalized further that can deal with uncountably many C-sets.
\end{abstract}

\section{Introduction}
In 1981, Furstenberg established the famous central set theorem\cite[Proposition 8.21]{1981Recurrence}, which has many applications. For example, Furstenberg proved \cite[Theorem 8.22]{1981Recurrence} that any finite system of equations satisfying Rado's columns condition has solutions in any central set by this theorem. After that, many mathematicians devoted to extending this theorem from different aspects, such as \cite{1990Nonmetrizable, 1992Ramsey, 2021Central, 2015A}. Where a significant strengthening was obtained in \cite{2008A}, which can deal with all sequences at once, rather than countably many. But all these generalizations only consider one central set. In 2015, Phulara\cite{2015Ag} obtained a generalization which consider countably many C-sets in the same idempotent in $J(S)$ (it is also correct if C-sets are replaced by central sets), and he used this generalization to obtain some applications. 

After observation, we found that the idea of Phulara's generalization may be used to extend other combinatorial results. So in Section 2, we will apply this idea to several known results, in particular the following three results:
\begin{enumerate}
  \item[(i)] the property of central sets with respect to arithmetic progressions (i.e. every central set in $(\mathbb{N}, +)$ contains arbitrarily long arithmetic progressions);
  \item[(ii)] the property of IP sets with respect to finite sums (\cite[Theorem 5.8]{2012Algebra}); and
  \item[(iii)] the property of central sets in free semigroups (Theorem \ref{HJ}).
\end{enumerate}
Notice that the first result implies van der Waerden's Theorem (\cite[Corollary 14.2]{2012Algebra}). The second one implies Hindman's Theorem \cite[Corollary 5.10]{2012Algebra}) and the third one implies the Hales-Jewett Theorem (\cite[Corollary 14.8]{2012Algebra}). And we obtain the corresponding generalizations: Theorem \ref{vdw}, Theorem \ref{HT} and Theorem \ref{HJplus}.

Moreover, we consider whether these generalizations can be extended further to make the conclusion deal with finite sums of infinite sets (or sequences) rather than finite sums of finite sets (or sequences). We find that the first one and the third one fail, but the second one can go through as follows:
\begin{qqq}
  Suppose $(S, +)$ is a commutative semigroup, $p$ is an idempotent in $(\beta S, +)$ and $\langle C_n \rangle_{n=1}^\infty$ is a sequence in $p$. Then there exists for each $i \in \mathbb{N}$ a sequence $\langle x_{i, j}\rangle_{j=1}^\infty$ in $S$ satisfying $FS(\langle x_{i, j}\rangle_{j=1}^\infty) \subseteq C_i$, such that for each $F \in \mathcal{P}_f(\mathbb{N})$, $\sum_{i \in F} \langle x_{i, j}\rangle_{j=1}^\infty \subseteq C_{\min F}$.
\end{qqq}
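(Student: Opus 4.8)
The plan is to reduce the double-indexed statement to the construction of a single sequence and then build that sequence one point at a time inside members of $p$, in the spirit of the standard finite-sums argument but with extra bookkeeping on $\min F$. Recall that for $A\in p$ one writes $-s+A=\{t\in S:s+t\in A\}$ and $A^{\star}=\{s\in A:-s+A\in p\}$, and that for an idempotent $p$ both $A^{\star}\in p$ and the implication $s\in A^{\star}\Rightarrow -s+A^{\star}\in p$ hold. First I would observe that the within-sequence requirement $FS(\langle x_{i,j}\rangle_{j}) \subseteq C_i$ is just the case $F=\{i\}$ of the cross condition, so it suffices to secure the cross condition. Fixing a bijection $\pi:\mathbb{N}\to\mathbb{N}\times\mathbb{N}$ and writing $\pi(n)=(i_n,j_n)$, $y_n:=x_{i_n,j_n}$, the whole conclusion becomes: build $\langle y_n\rangle_{n=1}^{\infty}$ so that for every finite nonempty $K\subseteq\mathbb{N}$ one has $\sum_{n\in K}y_n\in C_{\mu(K)}$, where $\mu(K)=\min\{i_n:n\in K\}$. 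Indeed, unwinding the definition of $\sum_{i\in F}\langle x_{i,j}\rangle_j$, a typical element has the form $\sum_{i\in F}\sum_{j\in H_i}x_{i,j}$ with each $H_i\in\mathcal{P}_f(\mathbb{N})$, hence equals $\sum_{n\in K}y_n$ for $K=\pi^{-1}(\{(i,j):i\in F,\,j\in H_i\})$, and then $\mu(K)=\min F$. Since $\pi$ is a bijection, each row is hit infinitely often, so each $\langle x_{i,j}\rangle_{j}$ is a genuine infinite sequence.

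Second, I would construct $y_1,y_2,\dots$ recursively while maintaining the \emph{strengthened} invariant that for every finite nonempty $K\subseteq\{1,\dots,n\}$ one has $\sum_{k\in K}y_k\in\bigcap_{\ell=1}^{\mu(K)}C_\ell^{\star}$ (not merely membership in $C_{\mu(K)}^{\star}$). Having chosen $y_1,\dots,y_{n-1}$, I would pick $y_n$ from
\[
E_n=\Big(\bigcap_{\ell=1}^{i_n}C_\ell^{\star}\Big)\cap\bigcap_{\emptyset\ne K'\subseteq\{1,\dots,n-1\}}\ \bigcap_{\ell=1}^{\min(\mu(K'),\,i_n)}\big({-\sigma_{K'}}+C_\ell^{\star}\big),
\]
where $\sigma_{K'}=\sum_{k\in K'}y_k$. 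This is a finite intersection, so it lies in $p$ provided each factor does, and then one may pick $y_n\in E_n\subseteq S$.

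Third, and this is where the only real work lies, I must verify every factor of $E_n$ is in $p$. The sets $C_\ell^{\star}$ are in $p$, so the first block is fine. For a translate ${-\sigma_{K'}}+C_\ell^{\star}$ with $\ell\le\min(\mu(K'),i_n)$, the implication $s\in A^{\star}\Rightarrow -s+A^{\star}\in p$ (with $A=C_\ell$) gives membership in $p$ as soon as $\sigma_{K'}\in C_\ell^{\star}$. The delicate point is exactly when the new row $i_n$ is strictly smaller than $\mu(K')$: then the combined sum $\sigma_{K'}+y_n$ must land in $C_{i_n}$, yet a naive invariant would control $\sigma_{K'}$ only at level $C_{\mu(K')}$ with $\mu(K')>i_n$. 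This is precisely why the invariant demands $\sigma_{K'}\in C_\ell^{\star}$ for \emph{all} $\ell\le\mu(K')$; since $\ell\le\min(\mu(K'),i_n)\le\mu(K')$, the inductive hypothesis on $K'$ supplies $\sigma_{K'}\in C_\ell^{\star}$, hence ${-\sigma_{K'}}+C_\ell^{\star}\in p$. Thus $E_n\in p$, and the invariant is preserved: for $K=K'\cup\{n\}$ one has $\mu(K)=\min(\mu(K'),i_n)$, and $y_n\in {-\sigma_{K'}}+C_\ell^{\star}$ gives $\sum_{k\in K}y_k\in C_\ell^{\star}$ for every $\ell\le\mu(K)$ (the singleton case $K=\{n\}$ being handled by the first block). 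Taking $\ell=\mu(K)$ and undoing $\pi$ then yields the theorem. I expect all the difficulty to be concentrated in getting this min-index strengthening right; the remainder is the routine idempotent-translate calculus.
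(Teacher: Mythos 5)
Your proof is correct, and it takes a genuinely different route from the paper's. One notational point first: the paper defines $\sum_{i \in F}\langle x_{i,j}\rangle_{j=1}^\infty$ as the set of sums taking exactly \emph{one} term from each row $i \in F$, so the case $F = \{i\}$ does not by itself subsume $FS(\langle x_{i,j}\rangle_{j=1}^\infty) \subseteq C_i$; your reading, which allows a finite sum $\sum_{j \in H_i}x_{i,j}$ from each row, is a strictly stronger conclusion --- but since you prove that stronger ``mixed sums'' statement, the paper's theorem follows a fortiori. As for the architecture: the paper keeps the two-dimensional array and runs a double induction, first assuming WLOG that $\langle C_n \rangle_{n=1}^\infty$ is decreasing, then extending the $n \times n$ block by a new column (row by row, with auxiliary sets $B^A_X$ and index functions $t_m$) and afterwards a new row $\langle x_{n+1,j}\rangle_{j=1}^{n+1}$, with separate case analyses $1^\circ$--$4^\circ$ to restore the invariants. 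You instead linearize the array through a bijection $\pi: \mathbb{N} \rightarrow \mathbb{N} \times \mathbb{N}$ and construct a single sequence $\langle y_n \rangle_{n=1}^\infty$ one point at a time; your strengthened invariant $\sigma_K \in \bigcap_{\ell \leq \mu(K)}C_\ell^\star$ is exactly the same device as the paper's WLOG reduction, since with decreasing $C_n$ one has $C_m^\star \subseteq C_\ell^\star$ for $\ell \leq m$. Commutativity is used essentially in both arguments (you need it to reorder the $\pi$-scrambled terms into row-ordered sums; the paper needs it to append $x_{k,n+1}$ to sums in which $k$ is not the maximal index), consistent with the paper leaving the noncommutative case open. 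Your linearization buys a shorter verification, with all bookkeeping concentrated in the single function $\mu$ and one application of the idempotent-translate lemma per factor of $E_n$, plus the stronger conclusion $\sum_{i \in F}\sum_{j \in H_i}x_{i,j} \in C_{\min F}$; the paper's block construction stays closer to the literal form of the statement and to Phulara's original scheme, which the author then extends further in Section 3.
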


In the last section, we extend Phulara's result further and establish the following result in commutative semigroups:
\begin{ppp}
  Suppose $(S, +)$ is a commutative semigroup, $r \in J(S)$ is an idempotent and $R: \mathcal{P}_f({^\mathbb{N}}S) \rightarrow r$ is a function. Then there exist functions $\alpha: \mathcal{P}_f({^\mathbb{N}}S) \rightarrow S$ and $H: \mathcal{P}_f({^\mathbb{N}}S) \rightarrow \mathcal{P}_f(\mathbb{N})$ such that
  \begin{enumerate}
    \item if $F, G \in \mathcal{P}_f({^\mathbb{N}}S)$ and $F \subsetneq G$, then $\max H(F) < \min H(G)$, and
    \item if $m \in \mathbb{N}$, $G_1, \ldots, G_m \in \mathcal{P}_f({^\mathbb{N}}S)$, $G_1 \subsetneq \ldots \subsetneq G_m$ and $f_i \in G_i$ for each $i \in \{1, \ldots, m\}$, then $\sum_{i=1}^{m}(\alpha(G_i) + \sum_{t \in H(G_i)}f_i(t)) \in R(G_1)$.
  \end{enumerate}
\end{ppp}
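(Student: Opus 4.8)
The plan is to adapt the inductive scheme of the strong Central Sets Theorem \cite{2008A}, replacing the single target set $C$ by the family of sets $R(F)$, and to exploit the fact that every $F\in\mathcal P_f({}^{\mathbb N}S)$ has only finitely many subsets, so that at each stage one only has to intersect finitely many members of the ultrafilter $r$. Throughout I write, for $A\in r$, $A^\star=\{x\in A:-x+A\in r\}$, where $-x+A=\{y\in S:x+y\in A\}$. I will construct $\alpha(F)$ and $H(F)$ by induction on $|F|$, maintaining the strengthened hypothesis that for every chain $G_1\subsetneq\dots\subsetneq G_m=F$ ending at $F$ and every choice $f_i\in G_i$,
\[
\sum_{i=1}^{m}\Bigl(\alpha(G_i)+\sum_{t\in H(G_i)}f_i(t)\Bigr)\in R(G_1)^\star .
\]
Since $R(G_1)^\star\subseteq R(G_1)$, this yields conclusion~(2), while conclusion~(1) is arranged directly when $H(F)$ is chosen. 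Because the data used to define $\alpha(F),H(F)$ involve only proper subsets of $F$, all of strictly smaller cardinality, and because two distinct sets of the same cardinality are $\subsetneq$-incomparable, all $F$ with $|F|=n$ may be treated independently; this is what lets the argument accommodate the possibly uncountable $\mathcal P_f({}^{\mathbb N}S)$.

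Two standard inputs are needed. First, the idempotent lemma: since $r$ is idempotent, $A^\star\in r$ whenever $A\in r$, and $-x+A^\star\in r$ whenever $x\in A^\star$. Second, since $r\in J(S)$, every member of $r$ is a $J$-set, and a short shift argument (replacing each $f$ by $t\mapsto f(t+N)$) upgrades the defining property of a $J$-set to the following: for any $J$-set $B$, any $D\in\mathcal P_f({}^{\mathbb N}S)$ and any $N\in\mathbb N$ there exist $a\in S$ and $H\in\mathcal P_f(\mathbb N)$ with $\min H>N$ such that $a+\sum_{t\in H}f(t)\in B$ for every $f\in D$. For the base case $|F|=1$ I apply this to the $J$-set $R(F)^\star$ with $D=F$ and $N=0$, obtaining $\alpha(F),H(F)$ with $\alpha(F)+\sum_{t\in H(F)}f(t)\in R(F)^\star$ for the unique $f\in F$.

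For the inductive step, suppose $\alpha,H$ are defined on all proper subsets of $F$. Put $M=\max\{\max H(G):\emptyset\neq G\subsetneq F\}$; there are only finitely many such $G$. Consider the finite family $\mathcal T$ consisting of $R(F)^\star$ together with all sets $-y+R(G_1)^\star$, where $y=\sum_{i=1}^{m-1}\bigl(\alpha(G_i)+\sum_{t\in H(G_i)}f_i(t)\bigr)$ ranges over the partial sums attached to the finitely many chains $G_1\subsetneq\dots\subsetneq G_{m-1}\subsetneq F$ and the finitely many choices $f_i\in G_i$. By the induction hypothesis each such $y$ lies in $R(G_1)^\star$, so by the idempotent lemma every member of $\mathcal T$ belongs to $r$; hence $B=\bigcap\mathcal T\in r$, and $B$ is a $J$-set. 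Applying the upgraded $J$-set property to $B$ with $D=F$ and $N=M$ produces $a\in S$ and $H\in\mathcal P_f(\mathbb N)$ with $\min H>M$ and $a+\sum_{t\in H}g(t)\in B$ for every $g\in F$; set $\alpha(F)=a$ and $H(F)=H$. Then $\min H(F)>M$ gives~(1), the membership in $R(F)^\star$ handles the length-one case, and for a longer chain, adding $\alpha(F)+\sum_{t\in H(F)}f_m(t)\in -y+R(G_1)^\star$ to the corresponding $y$ shows the full sum lies in $R(G_1)^\star$, completing the induction.

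The main obstacle, and the place where Phulara's idea is pushed into the uncountable regime, is the bookkeeping of the inductive step: one must force the sum to land in $R(G_1)$ for the \emph{minimal} member $G_1$ of each chain, and simultaneously for every terminal sequence $f_m\in F$. This succeeds precisely because $F$ is finite, so the chains below $F$ and the choices $f_i\in G_i$ are finite in number; the finitely many targets $-y+R(G_1)^\star$ are then all in $r$, and their intersection remains in $r$. Checking that each target genuinely lies in $r$ (via the two parts of the idempotent lemma) and that the single extracted pair $(a,H)$ works uniformly over all $f_m\in F$ are the points that require care.
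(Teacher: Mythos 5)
Your proposal is correct and takes essentially the same route as the paper's proof: induction on $|F|$ with the strengthened hypothesis that chain sums land in $R(G_1)^\star$, intersecting $R(F)^\star$ with the finitely many translates $-y+R(G_1)^\star$ coming from chains below $F$, and then applying the $J$-set property uniformly over $f\in F$ with $\min H(F)$ forced above all previously used indices. The only cosmetic difference is that you re-derive the upgraded $J$-set property by the shift argument, whereas the paper simply cites \cite[Lemma 14.8.2]{2012Algebra}, which states exactly that.
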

Comparing with Phulara's result\cite[Theorem 2.6]{2015Ag}, our result deal with at most $\kappa^\omega$ many C-sets at once ($\kappa$ is the size of the semigroup), rather than only countably many. And this generalization also goes through in noncommutative semigroups(Theorem \ref{noncom}).

Now let us introduce some notions, notations and basic facts that we will refer to, most of these information can be found in \cite{2012Algebra}. Given a discrete semigroup $(S, \cdot)$, $\beta S$ is the Stone-\v{C}ech compactification of $S$ (it is the set of all ultrafilters on $S$) and there is a natural extension of $\cdot$ to $\beta S$ making $\beta S$ a compact right topological semigroup. So for $p, q \in \beta S$, $p \cdot q = \{A \subseteq S: \{x \in S: x^{-1}A \in q \} \in p \}$, where $x^{-1}A = \{y \in S: xy \in A \}$. And for each $p \in \beta S$, the function $\rho_p: \beta S \rightarrow \beta S$, defined by $\rho_p(q) = q \cdot p$, is continuous, and for each $x \in S$, $\lambda_x: \beta S \rightarrow \beta S$, defined by $\lambda_x(p) = x \cdot p$, is also continuous.The topological basis of $\beta S$ is $\{U_A: \emptyset \neq A \subseteq S \}$, where $U_A = \{p \in \beta S: A \in p \}$. Given a compact right topological semigroup $S$, it has a smallest nonempty ideal $K(S)$, which is the union of all minimal left ideals of $S$ and also the union of all minimal right ideals of $S$, an idempotent in $K(S)$ is called minimal.

Let $(S, +)$ be a semigroup, $\langle x_n \rangle_{n=1}^\infty$ be a sequence in $S$, write $\mathrm{FS}(\langle x_n \rangle_{n=1}^\infty) = \{\sum_{n \in H}x_n: H \in \mathcal{P}_f(\mathbb{N}) \}$, where $\mathcal{P}_f(\mathbb{N})$ is the set of nonempty finite subsets of $\mathbb{N}$ and $\sum_{n \in H}x_n$ is the sum in increasing order of indices. Similarly, if $k \in \mathbb{N}$ and $\langle x_n \rangle_{n=1}^k$ be a sequence in $S$, we denote $\mathrm{FS}(\langle x_n \rangle_{n=1}^k) = \{\sum_{n \in H}x_n: H \in \mathcal{P}_f(\mathbb{N})$ and $\max H \leq k \}$. A subset $A$ of $S$ is called an IP set if there exists a sequence $\langle x_n \rangle_{n=1}^\infty$ in $S$ such that $\mathrm{FS}(\langle x_n \rangle_{n=1}^\infty) \subseteq A$; $A$ is called a central set if there exists a minimal idempotent $p \in \beta S$ such that $A \in p$. See \cite[Definition 14.14.1]{2012Algebra} for the definition of J-sets, C-sets and $J(S)$; if the semigroup is commutative, then the first two of these are simpler (\cite[Definition 14.8.1, Definition 14.8.5]{2012Algebra}).

Here we emphasize a notation\cite[Definition 4.13]{2012Algebra} which will be applied frequently. If $(S, \cdot)$ is a semigroup, $A \subseteq S$ and $p \in \beta S$, then denote $A^\star(p) = \{ s \in A: s^{-1}A \in p \}$. For convenience, we write $A^\star$ instead of $A^\star(p)$ without causing ambiguity. Moreover, If $p$ is an idempotent and $A \in p$, then by \cite[Lemma 4.14]{2012Algebra}, $s^{-1}A^\star \in p$ for each $s \in A^\star$.
\section{Several Applications}
In this section, we shall apply Phulara's idea to several known result to establish the corresponding generalizations.

For convenience, if $F$ is a finite subset of $\mathbb{N}$ and $Y_n$ is a nonempty set (or sequence) for each $n \in F$, then we denote $\sum_{n \in F} Y_n = \{ \sum_{n \in F}a_n: a_n$ is a point in $Y_n$ for each $n \in F \}$. Then we have the following result, which is a generalization of \cite[Theorem 16.16]{2012Algebra}. See \cite[Definition 15.1]{2012Algebra} for image partition regular matrices.
\begin{theorem}\label{matrix}
  Suppose $p$ is a minimal idempotent in $(\beta\mathbb{N}, +)$, $\langle B_n \rangle_{n=1}^\infty$ is a sequence in $p$, $\langle A_n \rangle_{n=1}^\infty$ is a sequence of finite image partition regular matrices with entries from $\mathbb{Q}$ and $m_n$ is the number of columns of $A_n$ for each $n \in \mathbb{N}$. Then for each $n \in \mathbb{N}$, there exists $\overrightarrow{x_n} \in \mathbb{N}^{m_n}$ such that for each $F \in \mathcal{P}_f(\mathbb{N})$, $\sum_{n \in F} Y_n \subseteq B_{\min F}$, where $Y_n$ is the set of entries of $A_n \overrightarrow{x_n}$ for each $n \in \mathbb{N}$.
\end{theorem}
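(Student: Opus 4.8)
The plan is to follow Phulara's idea adapted to the image-partition-regular setting. The key technical tool should be an analogue of the statement that image partition regular matrices produce solutions inside the starred sets of a minimal idempotent. Specifically, I would first recall the characterization behind \cite[Theorem 16.16]{2012Algebra}: if $p$ is a minimal idempotent in $(\beta\mathbb{N},+)$, $A$ is a finite image partition regular matrix with entries from $\mathbb{Q}$ having $m$ columns, and $B \in p$, then there exists $\overrightarrow{x} \in \mathbb{N}^m$ such that every entry of $A\overrightarrow{x}$ lies in $B^\star$ (the starred set $B^\star(p)$). This is the single-set version that I want to iterate. The heart of the matter is that one can demand the solution vector's image entries land not merely in $B$ but in the sharper set $B^\star$, which has the self-similarity property recorded in the excerpt: for $s \in B^\star$, we have $-s + B^\star \in p$.

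Next I would build the sequence $\langle \overrightarrow{x_n} \rangle$ recursively so that the partial sums stay controlled. Set up an auxiliary decreasing sequence of sets to absorb the finite-sum condition. At stage $n$, having chosen $\overrightarrow{x_1}, \dots, \overrightarrow{x_{n-1}}$, I would form a finite intersection of translates of the starred sets $B_k^\star$ for $k \le n$, translated by all partial sums $\sum_{i \in F} a_i$ arising from entries $a_i \in Y_i$ of the already-chosen blocks; since there are only finitely many such partial sums and each translate $-s + B_k^\star$ lies in $p$ whenever $s \in B_k^\star$, their intersection $D_n$ is again a member of $p$. Applying the single-matrix result to the matrix $A_n$ and the set $D_n \in p$ yields $\overrightarrow{x_n} \in \mathbb{N}^{m_n}$ whose image entries $Y_n$ lie in $D_n^\star \subseteq D_n$. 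The construction must be arranged so that the ``min $F$'' bookkeeping works: when we add a new entry from $Y_n$ to a partial sum whose smallest index is $\min F$, the requirement is membership in $B_{\min F}$, so the translates defining $D_n$ should be taken with respect to $B_k^\star$ for every $k \le n$, guaranteeing that any later extension of a sum beginning at index $k$ stays inside $B_k^\star \subseteq B_k$.

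To verify the conclusion, I would fix $F \in \mathcal{P}_f(\mathbb{N})$ with $\min F = k$ and elements $a_n \in Y_n$ for $n \in F$, then show $\sum_{n \in F} a_n \in B_k$ by induction on $|F|$, peeling off the largest index and using that each $a_n$ lies in a translate of $B_k^\star$ that precisely cancels the sum of the smaller-indexed terms. The induction hypothesis should be phrased as: for any $F$ with $\min F = k$, the partial sum lies in $B_k^\star$; the successor step uses the defining property $-s + B_k^\star \in p$ together with the fact that the newly added entry was chosen inside the appropriate translate. I expect the main obstacle to be the correct formulation of the decreasing auxiliary sets so that a single element $a_n \in Y_n$ simultaneously satisfies the translate conditions for \emph{all} the indices $k \le n$ that could serve as $\min F$ in some future sum; this is exactly where Phulara's device of intersecting over all relevant earlier starred sets is essential, and where one must be careful that the intersection remains a member of $p$ at every stage. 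The availability of the entry-wise starred-set version of \cite[Theorem 16.16]{2012Algebra} is what makes the whole recursion run, so confirming that refinement (rather than merely membership in $B_n$) is the critical preliminary step.
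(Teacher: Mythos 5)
Your proposal is correct and follows essentially the same route as the paper: the paper invokes Theorem 16.14(a) of \cite{2012Algebra} (the central-sets version, applied to members of the minimal idempotent $p$, which are automatically central --- this is exactly your ``starred'' refinement, obtained by applying that theorem to $B^\star \in p$), and runs the identical recursion, intersecting $B_{n+1}^\star$ with the translates $-a + B_k^\star$ over the finitely many partial sums $a \in M_k$ with $\min F = k$, then verifying $\sum_{k \in F} Y_k \subseteq B_{\min F}^\star$ by peeling off the largest index. The only bookkeeping point to make explicit is that your set $D_n$ must contain the untranslated $B_n^\star$ as a conjunct (as the paper's set $B = B_{n+1}^\star \cap \bigcap_{k=1}^{n}\bigcap_{a \in M_k}(-a + B_k^\star)$ does), since the translates alone do not handle the case $F = \{n\}$ nor keep the inductive hypothesis $M_n \subseteq B_n^\star$ available at later stages.
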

\begin{proof}
  Note that $B_1^\star$ is central, so by \cite[Theorem 16.14(a)]{2012Algebra}, pick some $\overrightarrow{x_1} \in \mathbb{N}^{m_1}$ such that all entries of $A_1 \overrightarrow{x_1}$ are in  $B_1^\star$. Let $Y_1$ be the set of entries of $A_1 \overrightarrow{x_1}$. 
  
  Inductively, assume $n \in \mathbb{N}$ and we have chosen $\overrightarrow{x_k} \in \mathbb{N}^{m_k}$ and $Y_k \subseteq \mathbb{N}$ for each $k \in \{ 1, \ldots, n \}$ such that $Y_k$ is the set of entries of $A_k \overrightarrow{x_k}$ and for each $F \subseteq \{ 1, \ldots, n \}$, $\sum_{k \in F} Y_k \subseteq B_{\min F}^\star$. Then for each $k \in \{ 1, \ldots, n \}$, define $M_k = \bigcup\{\sum_{t \in F} Y_t: \emptyset \neq F \subseteq \{ 1, \ldots, n \}$ and $\min F = k \}$ and $B = B_{n+1}^\star \cap \bigcap_{k=1}^n\bigcap_{a \in M_k}(-a + B_k^\star)$. Note that each $M_k$ is finite and $M_k \subseteq B_k^\star$ by inductive hypothesis, it turns out that $B \in p$. Then we apply \cite[Theorem 16.14(a)]{2012Algebra} again to obtain $\overrightarrow{x_{n+1}} \in \mathbb{N}^{m_{n+1}}$ such that all entries of $A_{n+1} \overrightarrow{x_{n+1}}$ are in  $B$. Then let $Y_{n+1}$ be the set of entries of $A_{n+1} \overrightarrow{x_{n+1}}$.
  
  Next let us verify that for each $F \subseteq \{ 1, \ldots, n+1 \}$, $\sum_{k \in F} Y_k \subseteq B_{\min F}^\star$. Pick $F \subseteq \{ 1, \ldots, n+1 \}$ arbitrarily. If $n + 1 \notin F$, then it hold by hypothesis; otherwise, $n+1 \in F$. If $\{ n+1 \} = F$, then $\sum_{k \in F} Y_k = Y_{n+1} \subseteq B \subseteq B_{n+1}^\star = B_{\min F}^\star$; otherwise $|F| > 1$. Let $G \in F \setminus \{ n+1 \}$, so $\min F = \min G$ and $\max G \leq n$. Observe that for each $x \in \sum_{k \in F} Y_k$, there is some $a \in \sum_{k \in G} Y_k$ and $b \in Y_{n+1}$ such that $x = a + b$ and $a \in M_{\min G}$. Hence $b \in Y_{n+1} \subseteq B \subseteq -a + B_{\min G}^\star = -a + B_{\min F}^\star$. It turns out that $x = a + b \in B_{\min F}^\star$ and so $\sum_{k \in F} Y_k \subseteq B_{\min F}^\star$.
\end{proof}

By the similar argument, we have the following result which is a strengthening of \cite[Theorem 16.17]{2012Algebra}. See \cite[Definition 15.12]{2012Algebra} for kernel partition regular matrices.
\begin{theorem}
  Suppose $p$ is a minimal idempotent in $(\beta\mathbb{N}, +)$, $\langle B_n \rangle_{n=1}^\infty$ is a sequence in $p$, $\langle A_n \rangle_{n=1}^\infty$ is a sequence of kernel partition regular matrices with entries from $\mathbb{Q}$ and $m_n$ is the number of columns of $A_n$ for each $n \in \mathbb{N}$. Then for each $n \in \mathbb{N}$, there exists $\overrightarrow{x_n} \in \mathbb{N}^{m_n}$ such that $A_n \overrightarrow{x_n} = \overrightarrow{0}$ and for each $F \in \mathcal{P}_f(\mathbb{N})$, $\sum_{n \in F} Y_n \subseteq B_{\min F}$, where $Y_n$ is the set of entries of $\overrightarrow{x_n}$ for each $n \in \mathbb{N}$.
\end{theorem}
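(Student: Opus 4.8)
The plan is to follow the proof of Theorem~\ref{matrix} essentially verbatim, the only structural change being that the image-partition-regular engine \cite[Theorem 16.14(a)]{2012Algebra} is replaced by its kernel-partition-regular counterpart. Concretely, the fact I would use in place of \cite[Theorem 16.14(a)]{2012Algebra} is: if $A$ is a kernel partition regular matrix with $m$ columns and entries from $\mathbb{Q}$, and $C$ is central in $(\mathbb{N}, +)$, then there exists $\overrightarrow{x} \in \mathbb{N}^m$ with $A\overrightarrow{x} = \overrightarrow{0}$ and all entries of $\overrightarrow{x}$ lying in $C$; for a single matrix and a single central set this is \cite[Theorem 16.17]{2012Algebra}. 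Accordingly, I would redefine $Y_n$ to be the set of entries of the solution vector $\overrightarrow{x_n}$ itself, rather than of $A_n \overrightarrow{x_n}$, and every appeal that previously placed the entries of $A_n \overrightarrow{x_n}$ into a central set is now replaced by one that places the entries of $\overrightarrow{x_n}$ there while simultaneously forcing $A_n \overrightarrow{x_n} = \overrightarrow{0}$.

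For the construction I would recurse exactly as before. Since $B_1 \in p$ and $p$ is a minimal idempotent, $B_1^\star \in p$ is central, so the engine yields $\overrightarrow{x_1} \in \mathbb{N}^{m_1}$ with $A_1 \overrightarrow{x_1} = \overrightarrow{0}$ and all entries of $\overrightarrow{x_1}$ in $B_1^\star$; let $Y_1$ be that set of entries. Assuming $\overrightarrow{x_1}, \ldots, \overrightarrow{x_n}$ and $Y_1, \ldots, Y_n$ have been chosen so that $A_k \overrightarrow{x_k} = \overrightarrow{0}$, $Y_k$ is the set of entries of $\overrightarrow{x_k}$, and $\sum_{k \in F} Y_k \subseteq B_{\min F}^\star$ for every $\emptyset \neq F \subseteq \{1, \ldots, n\}$, I would set $M_k = \bigcup\{\sum_{t \in F} Y_t : \emptyset \neq F \subseteq \{1, \ldots, n\}\text{ and }\min F = k\}$ and $B = B_{n+1}^\star \cap \bigcap_{k=1}^n \bigcap_{a \in M_k}(-a + B_k^\star)$. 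Each $M_k$ is finite and $M_k \subseteq B_k^\star$ by hypothesis, so $-a + B_k^\star \in p$ for every $a \in M_k$ by \cite[Lemma 4.14]{2012Algebra}; hence $B$ is a finite intersection of members of $p$, so $B \in p$ and in particular $B$ is central. Applying the engine to $B$ produces $\overrightarrow{x_{n+1}} \in \mathbb{N}^{m_{n+1}}$ with $A_{n+1} \overrightarrow{x_{n+1}} = \overrightarrow{0}$ and all entries of $\overrightarrow{x_{n+1}}$ in $B$; let $Y_{n+1}$ be that set of entries.

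The verification that $\sum_{k \in F} Y_k \subseteq B_{\min F}^\star$ for every $\emptyset \neq F \subseteq \{1, \ldots, n+1\}$ is then identical to the one in Theorem~\ref{matrix}: the case $n+1 \notin F$ is the inductive hypothesis, the case $F = \{n+1\}$ follows from $Y_{n+1} \subseteq B \subseteq B_{n+1}^\star$, and when $|F| > 1$ and $n+1 \in F$ one writes each $x \in \sum_{k \in F} Y_k$ as $x = a + b$ with $a \in \sum_{k \in G} Y_k \subseteq M_{\min G}$ for $G = F \setminus \{n+1\}$ and $b \in Y_{n+1} \subseteq B \subseteq -a + B_{\min G}^\star$, whence $x \in B_{\min G}^\star = B_{\min F}^\star$. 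Since $B_{\min F}^\star \subseteq B_{\min F}$, the displayed conclusion follows. I do not expect any genuine obstacle: the entire induction is a transcription of the one already carried out, and the single point requiring care is that the kernel-partition-regular engine delivers the solution's own entries (not the entries of its image) into an arbitrary central set while enforcing $A_n \overrightarrow{x_n} = \overrightarrow{0}$. This availability is exactly what the columns-condition characterization of kernel partition regularity, together with the Central Sets Theorem, guarantees, so the argument passes through unchanged.
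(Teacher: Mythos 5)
Your proposal is correct and is exactly the paper's proof: the paper simply notes that the argument of Theorem~\ref{matrix} goes through verbatim with \cite[Theorem 16.14(a)]{2012Algebra} replaced by \cite[Theorem 16.14(b)]{2012Algebra}, which is precisely the single-matrix engine you invoke (your citation of \cite[Theorem 16.17]{2012Algebra} for that fact is slightly off---16.17 is the one-central-set sequence version being strengthened, while 16.14(b) is the statement you actually use---but the content is right). The redefinition of $Y_n$ as the entries of $\overrightarrow{x_n}$ rather than of $A_n\overrightarrow{x_n}$, and the unchanged inductive verification, match the paper's intent exactly.
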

\begin{proof}
  The proof is nearly the same as that of Theorem \ref{matrix} except that we need to apply \cite[Theorem 16.14(b)]{2012Algebra} instead of \cite[Theorem 16.14(a)]{2012Algebra}.
\end{proof}

The idea can also be applied to the property of central sets with respect to arithmetic progressions (i.e. every central set in ($\mathbb{N}, +)$ contains arbitrarily long arithmetic progressions), which deduces van der Waerden's Theorem (\cite[Corollary 14.2]{2012Algebra}).
\begin{theorem}\label{vdw}
  Suppose $p$ is a minimal idempotent in $(\beta\mathbb{N}, +)$, $\langle C_n \rangle_{n=1}^\infty$ is a sequence in $p$ and $\langle a_n \rangle_{n=1}^\infty$ is a sequence in $\mathbb{N}$. Then there exists for each $n \in \mathbb{N}$ an arithmetic progression $Y_n$ of length $a_n$ in $C_n$ such that for each $F \in \mathcal{P}_f(\mathbb{N})$, $\sum_{n \in F} Y_n \subseteq C_{\min F}$.
\end{theorem}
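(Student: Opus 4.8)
The plan is to mirror the inductive construction of Theorem~\ref{matrix} almost verbatim, replacing the appeal to image partition regularity with the classical fact that every central set in $(\mathbb{N},+)$ contains arbitrarily long arithmetic progressions. The key observation driving the whole argument is that the starred sets $C_n^\star$ remain central (indeed $C_n^\star \in p$ and $p$ is a minimal idempotent), and that by \cite[Lemma 4.14]{2012Algebra} they enjoy the self-absorbing property $-s + C_n^\star \in p$ for each $s \in C_n^\star$. This last fact is exactly what lets us force later arithmetic progressions to land not only in $C_{n+1}^\star$ but simultaneously in translates $-a + C_k^\star$ for all relevant partial sums $a$ already produced.

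First I would note that $C_1^\star$ is central, so it contains an arithmetic progression $Y_1$ of length $a_1$; record $Y_1 \subseteq C_1^\star$. For the inductive step, assume that for each $k \in \{1,\ldots,n\}$ we have chosen an arithmetic progression $Y_k$ of length $a_k$ with the property that $\sum_{k \in F} Y_k \subseteq C_{\min F}^\star$ for every nonempty $F \subseteq \{1,\ldots,n\}$. Exactly as in Theorem~\ref{matrix}, for each $k \le n$ set $M_k = \bigcup\{\sum_{t \in F} Y_t : \emptyset \neq F \subseteq \{1,\ldots,n\}\text{ and }\min F = k\}$, and define
\[
  B = C_{n+1}^\star \cap \bigcap_{k=1}^{n}\bigcap_{a \in M_k}\bigl(-a + C_k^\star\bigr).
\]
Since each $M_k$ is finite and $M_k \subseteq C_k^\star$ by the inductive hypothesis, every $-a + C_k^\star$ lies in $p$, so the finite intersection $B$ lies in $p$ and is therefore central. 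Hence $B$ contains an arithmetic progression $Y_{n+1}$ of length $a_{n+1}$.

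It then remains to verify that $\sum_{k \in F} Y_k \subseteq C_{\min F}^\star$ for every nonempty $F \subseteq \{1,\ldots,n+1\}$; this is the same case analysis as in Theorem~\ref{matrix}. If $n+1 \notin F$ the claim is the inductive hypothesis; if $F = \{n+1\}$ then $Y_{n+1} \subseteq B \subseteq C_{n+1}^\star$; and if $n+1 \in F$ with $|F|>1$, writing $G = F \setminus \{n+1\}$ (so $\min F = \min G$) each element of $\sum_{k\in F} Y_k$ has the form $a+b$ with $a \in \sum_{k\in G} Y_k \subseteq M_{\min G}$ and $b \in Y_{n+1} \subseteq B \subseteq -a + C_{\min G}^\star$, giving $a + b \in C_{\min F}^\star$. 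Finally, since $C_n^\star \subseteq C_n$, the stronger conclusion with $C_{\min F}^\star$ yields the stated one with $C_{\min F}$.

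I do not expect a genuine obstacle here, since the structural skeleton is identical to Theorem~\ref{matrix}; the only point that needs a moment's care is confirming that the translation-closure property used to build $B$ is legitimate, i.e.\ that an arithmetic progression of a prescribed length can still be extracted from $B$. This is guaranteed precisely because $B \in p$ forces $B$ to be central, and centrality alone suffices to invoke the arithmetic-progression property; no appeal to the specific generators of the progressions is needed. The mild subtlety worth flagging is that an arithmetic progression of length $a_{n+1}$ drawn from the translate $-a + C_k^\star$ need not be an arithmetic progression in the original $C_k^\star$ after shifting by $a$, but this is harmless: we only require that each summand $b$ lies in the translate, which is exactly what membership $b \in B$ delivers.
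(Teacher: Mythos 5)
Your proposal is correct and follows essentially the same route as the paper's own proof: work with the starred sets $C_n^\star$, form the finite sets $M_k$ of partial sums, intersect $C_{n+1}^\star$ with the translates $-a + C_k^\star$ (legitimate by \cite[Lemma 4.14]{2012Algebra} since $M_k \subseteq C_k^\star$), and extract the next progression from the resulting member of $p$, which is central. The only difference is cosmetic: the paper ends the induction with ``which is as desired,'' implicitly deferring the case analysis to the proof of Theorem~\ref{matrix}, whereas you spell that verification out explicitly.
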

\begin{proof}
  Note that $C_1^\star$ is a central set, pick an arithmetic progression $Y_1$ of length $a_1$ in $C_1^\star$. Assume $n \in \mathbb{N}$ and we have chosen $Y_k$ for each $k \in \{ 1, \ldots, n \}$ such that $Y_k$ is an arithmetic progression of length $a_k$ in $C_k^\star$ and for each $F \in \{ 1, \ldots, n \}$, $\sum_{k \in F} Y_k \subseteq C_{\min F}^\star$. For each $k \in \{ 1, \ldots, n \}$, define $M_k = \bigcup\{\sum_{t \in F} Y_t: \emptyset \neq F \subseteq \{ 1, \ldots, n \}$ and $\min F = k \}$ and $C = C_{n+1}^\star \cap \bigcap_{k=1}^n\bigcap_{a \in M_k}(-a + C_k^\star)$. Observe that each $M_k$ is finite and $M_k \subseteq C_k^\star$ by inductive hypothesis, so $C \in p$, which means $C$ is central. Then pick an arithmetic progression $Y_{n+1}$ of length $a_{n+1}$ in $C$, which is as desired. 
\end{proof}

One may consider whether this idea apply to the property of IP sets with respect to finite sums (\cite[Theorem 5.8]{2012Algebra}), which deduces Hindman's Theorem (\cite[Corollary 5.10]{2012Algebra}). After observation of above three generalizations, we notice that the conclusion only deals with finite sums of finite sets or sequences. Hence if we want to obtain a generalization of \cite[Theorem 5.8]{2012Algebra} in $(\mathbb{N}, +)$ by this idea, the following result is optimal:
\begin{theorem}\label{HT}
  Suppose $p$ is an idempotent in $(\beta\mathbb{N}, +)$, $\langle C_n \rangle_{n=1}^\infty$ is a sequence in $p$ and $\langle a_n \rangle_{n=1}^\infty$ is a sequence in $\mathbb{N}$. Then there exists for each $n \in \mathbb{N}$ a sequence $Y_n$ in $\mathbb{N}$ with length $a_n$ satisfying $FS(Y_n) \subseteq C_n$, such that for each $F \in \mathcal{P}_f(\mathbb{N})$, $\sum_{n \in F} Y_n \subseteq C_{\min F}$.
\end{theorem}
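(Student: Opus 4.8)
The plan is to follow the same inductive skeleton used in Theorem \ref{matrix} and Theorem \ref{vdw}, replacing the relevant ``one large subset'' lemma with the appropriate result about IP sets and finite sums. Since $p$ is an idempotent (not necessarily minimal, which suffices here because the base result \cite[Theorem 5.8]{2012Algebra} only needs $C_1^\star \in p$), each $C_n^\star$ lies in $p$ and has the star-property: for $s \in C_n^\star$ we have $-s + C_n^\star \in p$. The first step is to pick, inside $C_1^\star$, a sequence $Y_1$ of length $a_1$ with $\mathrm{FS}(Y_1) \subseteq C_1^\star$; this is exactly \cite[Theorem 5.8]{2012Algebra} applied to the IP set (or rather to an element of the idempotent $p$) $C_1^\star$.

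Next I would set up the induction hypothesis: after stage $n$ we have finite sequences $Y_1, \ldots, Y_n$ with $\mathrm{FS}(Y_k) \subseteq C_k^\star$ for each $k$, and for every nonempty $F \subseteq \{1, \ldots, n\}$ the containment $\sum_{k \in F} Y_k \subseteq C_{\min F}^\star$. To carry out stage $n+1$, I form, exactly as in the earlier proofs, the finite sets $M_k = \bigcup\{\sum_{t \in F} Y_t : \emptyset \neq F \subseteq \{1, \ldots, n\} \text{ and } \min F = k\}$ for each $k \le n$, and then set $C = C_{n+1}^\star \cap \bigcap_{k=1}^n \bigcap_{a \in M_k}(-a + C_k^\star)$. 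Each $M_k$ is finite and $M_k \subseteq C_k^\star$ by the induction hypothesis, so each $-a + C_k^\star \in p$; a finite intersection of members of the ultrafilter $p$ with $C_{n+1}^\star$ is again in $p$, hence $C \in p$. Now I invoke \cite[Theorem 5.8]{2012Algebra} one more time to extract a sequence $Y_{n+1}$ of length $a_{n+1}$ with $\mathrm{FS}(Y_{n+1}) \subseteq C \subseteq C_{n+1}^\star$.

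The verification that the extended family still satisfies $\sum_{k \in F} Y_k \subseteq C_{\min F}^\star$ for all $F \subseteq \{1, \ldots, n+1\}$ is the crux, and it runs parallel to the corresponding paragraph in Theorem \ref{matrix}. If $n+1 \notin F$ the hypothesis applies directly; if $F = \{n+1\}$, then $\sum_{k \in F} Y_k = \mathrm{FS}(Y_{n+1}) \subseteq C \subseteq C_{n+1}^\star$; otherwise writing $G = F \setminus \{n+1\}$ we have $\min F = \min G$ and $\max G \le n$, and any element of $\sum_{k \in F} Y_k$ decomposes as $a + b$ with $a \in \sum_{k \in G} Y_k \subseteq M_{\min G}$ and $b \in \mathrm{FS}(Y_{n+1}) \subseteq C \subseteq -a + C_{\min G}^\star$, so $a + b \in C_{\min F}^\star$. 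Finally, since $C_{\min F}^\star \subseteq C_{\min F}$, the conclusion for each finite $F \in \mathcal{P}_f(\mathbb{N})$ follows.

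The one conceptual obstacle worth flagging is the appearance of $\mathrm{FS}(Y_{n+1})$ rather than just the entries of $Y_{n+1}$: the single-block case $F = \{n+1\}$ now requires $\mathrm{FS}(Y_{n+1}) \subseteq C_{n+1}^\star$ (not merely $Y_{n+1} \subseteq C_{n+1}^\star$), which is precisely why \cite[Theorem 5.8]{2012Algebra} is the correct tool and why it must be applied to the full ultrafilter member $C$ at each stage. Because $Y_n$ has only finite length $a_n$, this is still a legitimate application of \cite[Theorem 5.8]{2012Algebra} (one simply truncates the infinite sequence it produces), and the whole argument is otherwise a routine transcription of the pattern already established. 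I expect no genuine difficulty beyond keeping the star-sets and the finite-sums bookkeeping straight.
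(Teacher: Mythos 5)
Your proposal is correct and is essentially the paper's own proof: the paper proves Theorem~\ref{HT} by running the induction of Theorem~\ref{vdw} verbatim, replacing the arithmetic-progression step with the fact that every member of the idempotent $p$ (being IP, via \cite[Theorem 5.8]{2012Algebra}) contains a finite sequence of any prescribed length together with its finite sums --- exactly your truncation of Theorem 5.8 applied to $C \in p$. The only nitpick is that for $F = \{n+1\}$ one has $\sum_{k \in F} Y_k \subseteq \mathrm{FS}(Y_{n+1})$ (the set of entries, not all finite sums), rather than equality, but the required containment in $C_{n+1}^\star$ holds either way.
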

\begin{proof}
  The proof is essential the same as that of Theorem \ref{vdw} except that we need to apply the property of IP sets that each IP set in $(\mathbb{N}, +)$ contains an arbitrarily long sequence and its finite sum.
\end{proof}
When all $C_n$'s are the same and all $a_n$'s are picked by 1, Theorem \ref{HT} is exactly \cite[Theorem 5.8]{2012Algebra} in $(\mathbb{N}, +)$. However, one may notice that the proof of Theorem \ref{HT} does not use the full property of IP sets with respect to finite sums, it only needs that each $C_n$ is IP$_0$ (\cite[p5]{2020On}), although we need all $C_n$'s to be in the same idempotent that still implies each $C_n$ is IP. So it is natural to ask whether Theorem \ref{HT} still holds if one requests each $Y_n$ to be infinite. Here we give a positive answer in commutative semigroups:
\begin{theorem}\label{HTplus}
  Suppose $(S, +)$ is a commutative semigroup, $p$ is an idempotent in $(\beta S, +)$ and $\langle C_n \rangle_{n=1}^\infty$ is a sequence in $p$. Then there exists for each $i \in \mathbb{N}$ a sequence $\langle x_{i, j}\rangle_{j=1}^\infty$ in $S$ satisfying $FS(\langle x_{i, j}\rangle_{j=1}^\infty) \subseteq C_i$, such that for each $F \in \mathcal{P}_f(\mathbb{N})$, $\sum_{i \in F} \langle x_{i, j}\rangle_{j=1}^\infty \subseteq C_{\min F}$.
\end{theorem}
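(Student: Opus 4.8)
The plan is to construct all of the sequences at once by a single induction that adds one term at a time, in the spirit of the proofs of Theorem \ref{vdw} and Theorem \ref{HT} but now interleaving the rows. The only facts about $p$ that I will use are that $C_n^\star \in p$ for every $n$ and that $-s + C_n^\star \in p$ whenever $s \in C_n^\star$ (the idempotent property recalled in the introduction). First I would fix an enumeration $\langle (i_n, j_n) \rangle_{n=1}^\infty$ of $\mathbb{N} \times \mathbb{N}$ in which each row index $i$ occurs infinitely often and, for each fixed $i$, the pairs $(i,1), (i,2), \ldots$ occur in this relative order (any diagonal enumeration does this). At stage $n$ I define $x_{i_n, j_n}$, so that when $x_{i,j}$ is chosen the earlier terms $x_{i,1}, \ldots, x_{i,j-1}$ of its row are already in hand.

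The crux is to choose the right inductive invariant. The naive one, asking only that each finite sum whose set of row indices is $F$ lie in $C_{\min F}^\star$, cannot be maintained here: because the rows are infinite they must be interleaved, so a term added \emph{late} to a low row $i_0 = \min F$ is forced to combine with terms of higher rows that were chosen \emph{earlier}, and a priori those earlier terms lie only in their own star sets, not in $C_{\min F}^\star$. To repair this I would strengthen the invariant to the requirement that every finite sum $w$ of already-chosen terms whose set of row indices is $F$ satisfies $w \in \bigcap_{k=1}^{\min F} C_k^\star$. The point is that this intersection is \emph{finite}, hence still a member of $p$, and it forces every summand to already sit in all of the lower target sets $C_1^\star, \ldots, C_{\min F}^\star$.

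With this invariant the inductive step goes through cleanly. When a new term $z = x_{i_0, j_0}$ is added, the newly created finite sums are $z$ itself and the sums $v + z$, where $v$ ranges over the finitely many sums of previously chosen terms; by commutativity I may write $z$ last. For such a nonempty $v$ with row set $G_v$, the sum $v + z$ has row set $G_v \cup \{i_0\}$; since $\min(G_v \cup \{i_0\}) \le \min G_v$, for every $k \le \min(G_v \cup \{i_0\})$ the invariant gives $v \in C_k^\star$, so $-v + C_k^\star \in p$. I would therefore choose
\[
  z \in \Big( \bigcap_{k=1}^{i_0} C_k^\star \Big) \cap \bigcap_{v} \ \bigcap_{k=1}^{\min(G_v \cup \{i_0\})} \big(-v + C_k^\star\big),
\]
a finite intersection of members of $p$ and hence a nonempty member of $p$. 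Picking $z$ here makes $z \in \bigcap_{k \le i_0} C_k^\star$ and $v + z \in C_k^\star$ for every $k \le \min(G_v \cup \{i_0\})$, which is exactly the invariant for all the new sums; the membership in the $\star$ sets rather than merely in the $C_k$ is what keeps the induction self-sustaining.

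Finally I would read off the theorem from the invariant. Taking $F = \{i\}$ gives $FS(\langle x_{i,j} \rangle_{j=1}^\infty) \subseteq C_i^\star \subseteq C_i$, and for a general $F$ with $w_i \in FS(\langle x_{i,j}\rangle_{j=1}^\infty)$ for each $i \in F$, the sum $\sum_{i \in F} w_i$ has row set $F$ and therefore lies in $C_{\min F}^\star \subseteq C_{\min F}$; in particular $\sum_{i \in F} \langle x_{i,j} \rangle_{j=1}^\infty \subseteq C_{\min F}$, which is the desired conclusion (in fact a slightly stronger, finite-sum version of it). The one genuinely delicate point, as explained above, is the choice of invariant: commutativity is used precisely to move the newest term to the end of each affected sum, and the finiteness of $\bigcap_{k=1}^{\min F} C_k^\star$ is what lets the stronger invariant stay inside $p$.
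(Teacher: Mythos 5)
Your proof is correct, and while it runs on the same engine as the paper's — iterated selection inside the idempotent ultrafilter $p$ from finite intersections of star sets $C_k^\star$ and translates $-v + C_k^\star$, justified by the lemma that $s \in A^\star$ implies $-s + A^\star \in p$ — it organizes the induction in a genuinely different and cleaner way. The paper first normalizes to a nested sequence by replacing $C_n$ with $\bigcap_{k=1}^{n} C_k$, then runs a block induction: at stage $n$ it extends each of rows $1,\ldots,n$ by one term (in order of increasing row index, which forces the auxiliary functions $t_m$ to track which rows currently have length $n$ versus $n+1$) and then builds a new row of length $n+1$ by a separate inner induction. Your strengthened invariant $w \in \bigcap_{k=1}^{\min F} C_k^\star$ is exactly equivalent to that normalization, since the intersection is finite and so $\bigl(\bigcap_{k=1}^{n} C_k\bigr)^\star = \bigcap_{k=1}^{n} C_k^\star$; but by adding a single term per step along a diagonal enumeration you linearize the paper's multi-layer induction into one loop and eliminate the $t_m$ bookkeeping entirely. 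Your argument also yields a formally stronger conclusion than the paper states: your invariant covers arbitrary finite sums of distinct chosen terms with row set $F$, hence $\sum_{i \in F} w_i \in C_{\min F}$ even when each $w_i$ is itself an element of $FS(\langle x_{i,j}\rangle_{j=1}^\infty)$, whereas the paper's invariant tracks only $FS$ within a single row together with one-term-per-row cross sums. The delicate points are handled correctly on both sides: commutativity is used (by you and by the paper alike) precisely to rewrite each newly created sum as $v + z$ with the newest term last, and the finiteness of $\bigcap_{k=1}^{\min F} C_k^\star$ keeps the strengthened invariant inside $p$.
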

\begin{proof}
  Without loss of generality, we assume that $C_n \supseteq C_m$ whenever $n \leq m$ (if not for some $C_n$, let $\bigcap_{1 \leq i \leq n}C_n$ be the new $C_n$). We build $\langle \langle x_{i, j} \rangle_{j=1}^\infty \rangle_{i=1}^\infty$ by induction. First pick $x_{1, 1} \in C_1^\star$ arbitrarily.
  
  Assume $n \in \mathbb{N}$ and we have chosen $x_{i, j}$ for each $i, j \in \{1, \ldots, n \}$ such that $\mathrm{FS}(\langle x_{i, j}\rangle_{j=1}^n) \subseteq C_i^\star$ for each $i \in \{ 1, \ldots, n \}$ and $\sum_{i \in F} \langle x_{i, j}\rangle_{j=1}^n \subseteq C_{\min F}^\star$ for each nonempty $F \subseteq \{ 1, \ldots, n \}$. If $A \in p$ and $X \in \mathcal{P}_f(A^\star)$, we denote $B_X^A = A^\star \cap \bigcap_{x \in X}(-x + A^\star)$. Hence $B_X^A \in p$. For $m \in \{ 1, \ldots, n \}$, define a function $t_m: \{ 1, \ldots, n \} \setminus \{ m \} \rightarrow \{ n, n+1 \}$ by setting $t_m(i) = n+1$ if $i < m$ and $t_m(i) = n$ otherwise. Now let us build $\langle x_{i, n+1} \rangle_{i=1}^n$. 
  
  If $n = 1$, then $\mathrm{FS}(\langle x_{1, j}\rangle_{j=1}^n) = \{ x_{1, 1} \} \subseteq C_1^\star$, so $B_{\mathrm{FS}(\langle x_{1, j}\rangle_{j=1}^n)}^{C_1} \in p$ and we pick a point $x_{1, n+1} \in B_{\mathrm{FS}(\langle x_{1, j}\rangle_{j=1}^n)}^{C_1}$. Otherwise $n > 1$. By inductive hypothesis $\mathrm{FS}(\langle x_{1, j}\rangle_{j=1}^n) \subseteq C_1^\star$ and $\sum_{i \in F} \langle x_{i, j}\rangle_{j=1}^n \subseteq C_{\min F}^\star$ for each nonempty $F \subseteq \{ 1, \ldots, n \}$, we have $B_{\mathrm{FS}(\langle x_{1, j}\rangle_{j=1}^n)}^{C_1} \in p$ and $\bigcap_{\emptyset \neq F \subseteq \{ 2, \ldots, n \}}B_{\sum_{i \in F}\langle x_{i, j} \rangle_{j=1}^n}^{C_{\min F}} \in p$ so pick 
  \[ x_{1, n+1} \in B_{\mathrm{FS}(\langle x_{1, j}\rangle_{j=1}^n)}^{C_1} \cap \bigcap_{\emptyset \neq F \subseteq \{ 2, \ldots, n \}}B_{\sum_{i \in F}\langle x_{i, j} \rangle_{j=1}^n}^{C_{\min F}}. \]
   Generally, if $m \in \{1, \ldots, n-1 \}$ and for each $k \in \{ 1, \ldots, m \}$, we have picked $x_{k, n+1} \in B_{\mathrm{FS}(\langle x_{k, j}\rangle_{j=1}^n)}^{C_k} \cap \bigcap_{\emptyset \neq F \subseteq \{ 1, \ldots, n \} \setminus \{ k \}}B_{\sum_{i \in F}\langle x_{i, j} \rangle_{j=1}^{t_k(i)}}^{C_{\min F}}$, let us show that $ B_{\mathrm{FS}(\langle x_{m+1, j}\rangle_{j=1}^n)}^{C_{m+1}} \cap \bigcap_{\emptyset \neq F \subseteq \{ 1, \ldots, n \} \setminus \{ m+1 \}}B_{\sum_{i \in F}\langle x_{i, j} \rangle_{j=1}^{t_{m+1}(i)}}^{C_{\min F}} \in p$ so that we can pick $x_{m+1, n+1}$ from it. By hypothesis we immediately obtain $B_{\mathrm{FS}(\langle x_{m+1, j}\rangle_{j=1}^n)}^{C_{m+1}} \in p$.
   
   Now take nonempty $F \subseteq \{ 1, \ldots, n \} \setminus \{ m+1 \}$ and $x \in \sum_{i \in F}\langle x_{i, j} \rangle_{j=1}^{t_{m+1}(i)}$ arbitrarily, we shall show $x \in C_{\min{F}}^\star$. If $x \in \sum_{i \in F}\langle x_{i, j} \rangle_{j=1}^n$, by hypothesis $x \in C_{\min{F}}^\star$; otherwise, either $|F| = 1$, then there is some $k < m+1$ such that $x = x_{k, n+1}$, or $|F| > 1$, then there is some $k < m+1$ such that $x = y + x_{k, n+1}$ for some $y \in \sum_{i \in F \setminus \{ k \}}\langle x_{i, j} \rangle_{j=1}^{t_k(i)}$. If the former holds, $x = x_{k, n+1} \in B_{\mathrm{FS}(\langle x_{k, j}\rangle_{j=1}^n)}^{C_k} \subseteq C_k^\star = C_{\min{F}}^\star$. If the latter holds, then $x_{k, n+1} \in B_{\sum_{i \in F \setminus \{ k \}}\langle x_{i, j} \rangle_{j=1}^{t_k(i)}}^{C_{\min{F \setminus \{ k \}}}} \subseteq -y + C_{\min{F \setminus \{ k \}}}^\star$, so $x \in C_{\min{F \setminus \{ k \}}}^\star \subseteq C_{\min{F}}^\star$. Therefore, $\sum_{i \in F}\langle x_{i, j} \rangle_{j=1}^{t_{m+1}(i)} \subseteq C_{\min{F}}^\star$, which guarantees that $B_{\sum_{i \in F}\langle x_{i, j} \rangle_{j=1}^{t_{m+1}(i)}}^{C_{\min F}} \in p$ for any nonempty $F \subseteq \{ 1, \ldots, n \} \setminus \{ m+1 \}$. Then we pick a point $x_{m+1, n+1} \in B_{\mathrm{FS}(\langle x_{m+1, j}\rangle_{j=1}^n)}^{C_{m+1}} \cap \bigcap_{\emptyset \neq F \subseteq \{ 1, \ldots, n \} \setminus \{ m+1 \}}B_{\sum_{i \in F}\langle x_{i, j} \rangle_{j=1}^{t_{m+1}(i)}}^{C_{\min F}}$.
  
  By induction, we obtain $\langle x_{i, n+1} \rangle_{i=1}^n$ such that for each $k \in \{ 1, \ldots, n \}$, \[ 
  x_{k, n+1} \in B_{\mathrm{FS}(\langle x_{k, j}\rangle_{j=1}^n)}^{C_k} \cap \bigcap_{\emptyset \neq F \subseteq \{ 1, \ldots, n \} \setminus \{ k \}}B_{\sum_{i \in F}\langle x_{i, j} \rangle_{j=1}^{t_k(i)}}^{C_{\min F}}.\]
   Now let us build $\langle x_{n+1, j} \rangle_{j=1}^{n+1}$ by induction. For convenience, we denote $B = \bigcap_{\emptyset \neq F \subseteq \{ 1, \ldots, n \}}B_{\sum_{i \in F}\langle x_{i, j} \rangle_{j=1}^{n+1}}^{C_{\min F}}$. Notice that for any nonempty $F \subseteq \{ 1, \ldots, n \}$, $\sum_{i \in F}\langle x_{i, j} \rangle_{j=1}^{n+1} \subseteq C_{\min{F}}^\star$, the proof of which is nearly the same as last paragraph, so we omit it in order to avoid a lot of repetition. Hence $B \in p$ so pick $x_{n+1, 1} \in C_{n+1}^\star \cap B$. Observe that $\mathrm{FS}(\langle x_{n+1, j}\rangle_{j=1}^1) = \{ x_{n+1, 1} \} \subseteq C_{n+1}^\star$, so pick $x_{n+1, 2} \in B_{\mathrm{FS}(\langle x_{n+1, j}\rangle_{j=1}^1)}^{C_{n+1}} \cap B$.
  
  Assume $r \in \{ 2, \ldots, n \}$ and we have obtained $\langle x_{n+1, j} \rangle_{j=1}^r$ such that for each $m \in \{ 2, \ldots, r \}$, $\mathrm{FS}(\langle x_{n+1, j}\rangle_{j=1}^{m-1}) \subseteq C_{n+1}^\star$ and $x_{n+1, m} \in B_{\mathrm{FS}(\langle x_{n+1, j}\rangle_{j=1}^{m-1})}^{C_{n+1}} \cap B$. Let us show that $\mathrm{FS}(\langle x_{n+1, j}\rangle_{j=1}^r) \subseteq C_{n+1}^\star$. Take $x \in \mathrm{FS}(\langle x_{n+1, j}\rangle_{j=1}^r)$ arbitrarily. If $x \in \mathrm{FS}(\langle x_{n+1, j}\rangle_{j=1}^{r-1})$, then by inductive hypothesis $x \in C_{n+1}^\star$; otherwise either $x= x_{n+1, r}$ or $x = y + x_{n+1, r}$ for some $y \in \mathrm{FS}(\langle x_{n+1, j}\rangle_{j=1}^{r-1})$. If the former holds, by hypothesis $x \in C_{n+1}^\star$; if the latter holds, then $x_{n+1, r} \in B_{\mathrm{FS}(\langle x_{n+1, j}\rangle_{j=1}^{r-1})}^{C_{n+1}} \cap B \subseteq -y + C_{n+1}^\star$ so $x \in C_{n+1}^\star$. 
  
  Therefore $\mathrm{FS}(\langle x_{n+1, j}\rangle_{j=1}^r) \subseteq C_{n+1}^\star$, so we pick $x_{n+1, r+1} \in B_{\mathrm{FS}(\langle x_{n+1, j}\rangle_{j=1}^r)}^{C_{n+1}} \cap B$. By induction, we obtain $\langle x_{n+1, j} \rangle_{j=1}^{n+1}$ such that $x_{n+1, 1} \in C_{n+1}^\star \cap B$ and for $k \in \{ 2, \ldots, n+1 \}$, \[
  x_{n+1, k} \in B_{\mathrm{FS}(\langle x_{n+1, j}\rangle_{j=1}^{k-1})}^{C_{n+1}} \cap B. \]
  
 Now let us verify that for each $i \in \{ 1, \ldots, n+1 \}$, $\mathrm{FS}(\langle x_{i, j}\rangle_{j=1}^{n+1}) \subseteq C_i^\star$.
  
 1$^\circ$. If $i \in \{ 1, \ldots, n \}$, then for any $x \in \mathrm{FS}(\langle x_{i, j}\rangle_{j=1}^{n+1})$, if $x \in \mathrm{FS}(\langle x_{i, j}\rangle_{j=1}^n)$, then $x \in C_i^\star$ by inductive hypothesis; if $x = x_{i, n+1}$, then by construction $x \in B_{\mathrm{FS}(\langle x_{i, j}\rangle_{j=1}^n)}^{C_i} \subseteq C_i^\star$; otherwise, there is some $y \in \mathrm{FS}(\langle x_{i, j}\rangle_{j=1}^n)$ such that $x = y + x_{i, n+1}$, so $x_{i, n+1} \in B_{\mathrm{FS}(\langle x_{i, j}\rangle_{j=1}^n)}^{C_i} \subseteq -y + C_i^\star$, which deduces $x \in C_i^\star$.
  
 2$^\circ$. Otherwise, $i = n+1$. For any $x \in \mathrm{FS}(\langle x_{i, j}\rangle_{j=1}^{n+1})$, if $x = x_{n+1, 1}$, immediately we have $x \in C_{n+1}^\star$ by construction; if $x = x_{n+1, r}$ for some $r \in \{ 2, \ldots, n+1 \}$, then $x \in B_{\mathrm{FS}(\langle x_{n+1, j}\rangle_{j=1}^{r-1})}^{C_{n+1}} \subseteq C_{n+1}^\star$; otherwise, there exist $r \in \{ 2, \ldots, n+1 \}$ and $y \in \mathrm{FS}(\langle x_{n+1, j}\rangle_{j=1}^{r-1})$ such that $x = y + x_{n+1, r}$, so $x_{n+1, r} \in B_{\mathrm{FS}(\langle x_{n+1, j}\rangle_{j=1}^{r-1})}^{C_{n+1}} \subseteq -y + C_{n+1}^\star$, which deduces $x \in C_{n+1}^\star$.

 Next we verify that for each nonempty $F \subseteq \{ 1, \ldots, n+1 \}$, $\sum_{i \in F}\langle x_{i, j} \rangle_{j=1}^{n+1} \subseteq C_{\min F}^\star$. We have already known that $\sum_{i \in F}\langle x_{i, j} \rangle_{j=1}^{n+1} \subseteq C_{\min{F}}^\star$ for any nonempty $F \subseteq \{ 1, \ldots, n \}$. So we assume that $n + 1 \in F$.
  
 3$^\circ$. If $F = \{ n+1 \}$, then $\sum_{i \in F}\langle x_{i, j} \rangle_{j=1}^{n+1} = \{ x_{n+1, 1}, \ldots , x_{n+1, n+1} \} \subseteq C_{n+1}^\star = C_{\min F}^\star$.
 
 4$^\circ$. Otherwise $|F| > 1$ and $n + 1 \in F$. Then for any $x \in \sum_{i \in F}\langle x_{i, j} \rangle_{j=1}^{n+1}$, there is some $y \in \sum_{i \in F \setminus \{ n+1 \}}\langle x_{i, j} \rangle_{j=1}^{n+1}$ and some $k \in \{ 1, \ldots, n+1 \}$ such that $x = y + x_{n+1, k}$. Note that $x_{n+1, k} \in B \subseteq -y + C_{\min F \setminus \{ n+1 \}}^\star$, so $x \in C_{\min F \setminus \{ n+1 \}}^\star \subseteq C_{\min F}^\star$. 
 
 By induction, we obtain $\langle \langle x_{i, j} \rangle_{j=1}^\infty \rangle_{i=1}^\infty$, which is as desired.
\end{proof}

However, we do not know whether Theorem \ref{HTplus} still holds for noncommutative semigroups, so we leave it as an open question. But one may ask whether Theorem \ref{vdw} has a similar generalization like Theorem \ref{HT}, that is to say, can Theorem \ref{vdw} still hold if each arithmetic progression $Y_n$ is required to be infinite? Unfortunately, it fails badly because central sets may not contain arithmetic progressions of infinite length. 
\begin{theorem}\label{infAP}
  There is no ultrafilter $p \in \beta\mathbb{N}$ such that for each $A \in p$, $A$ contains an arithmetic progression of infinite length.
\end{theorem}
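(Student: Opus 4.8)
The plan is to reduce the statement to a single combinatorial partition fact and then invoke the defining property of an ultrafilter. Suppose, for contradiction, that such an ultrafilter $p \in \beta\mathbb{N}$ exists. Recall that for any $A \subseteq \mathbb{N}$ exactly one of $A$ and $\mathbb{N} \setminus A$ lies in $p$. Hence it suffices to exhibit a single set $A \subseteq \mathbb{N}$ such that \emph{neither} $A$ \emph{nor} its complement $\mathbb{N} \setminus A$ contains an arithmetic progression of infinite length; for then whichever of the two belongs to $p$ would immediately contradict the assumed property of $p$.

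To construct such an $A$ I would use consecutive blocks of geometrically growing length that alternate between $A$ and its complement. Concretely, set $A = \bigcup_{k=0}^{\infty} [2^{2k}, 2^{2k+1})$, so that $\mathbb{N} \setminus A = \bigcup_{k=0}^{\infty} [2^{2k+1}, 2^{2k+2})$. The crucial elementary observation is that any interval $[u, v)$ of integers with $v - u \geq d$ and $u$ at least the first term of a fixed arithmetic progression $P$ of common difference $d$ must meet $P$, since $d$ consecutive integers realize every residue modulo $d$. Because the lengths of both the $A$-blocks $[2^{2k}, 2^{2k+1})$ and the complement-blocks $[2^{2k+1}, 2^{2k+2})$ tend to infinity, for any infinite arithmetic progression $P$ with common difference $d$ one can choose $k$ large enough that some complement-block lying past the first term of $P$ has length at least $d$; that block then contains a term of $P$, whence $P \not\subseteq A$. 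Symmetrically, a sufficiently late $A$-block of length at least $d$ contains a term of $P$, whence $P \not\subseteq \mathbb{N} \setminus A$. Thus neither $A$ nor $\mathbb{N} \setminus A$ contains an infinite arithmetic progression.

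Combining the two steps produces the contradiction: the member of $p$ among $A$ and $\mathbb{N} \setminus A$ must contain an infinite arithmetic progression by the hypothesis on $p$, yet by construction it cannot, so no such $p$ exists. I expect the only point needing care to be the elementary interval lemma above; everything else is bookkeeping, and the precise growth rate of the blocks is immaterial, as any strictly increasing sequence of block lengths would serve equally well.
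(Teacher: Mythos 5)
Your proposal is correct, but it takes a genuinely different route from the paper. The paper enumerates the countably many infinite arithmetic progressions $\mathrm{AP}_{a,b}$ and diagonalizes: it recursively picks two disjoint sequences $\langle x_n\rangle$ and $\langle y_n\rangle$ with $x_n, y_n \in l_n$ (possible since each progression is infinite and only finitely many points have been used at each stage), so that $A = \{x_n : n \in \mathbb{N}\}$ and its complement each meet every infinite progression, and then applies the ultrafilter dichotomy exactly as you do. Your construction replaces this enumeration-plus-diagonalization with a single explicit set of alternating blocks $\bigcup_k [2^{2k}, 2^{2k+1})$, and the verification rests on the elementary interval lemma you isolate: an interval of $d$ consecutive integers beginning at or after the first term of a progression with common difference $d$ must contain a term of it, which is correct since among $d$ consecutive integers there is exactly one in each residue class modulo $d$ (explicitly, $n = \lceil (u-a)/d \rceil$ gives $a + nd \in [u, u+d)$). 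Because the block lengths tend to infinity, every infinite progression eventually meets blocks of both colors, so neither $A$ nor $\mathbb{N} \setminus A$ contains one. What each approach buys: yours is fully explicit, avoids any enumeration or recursion, and in fact shows every infinite progression meets both $A$ and its complement infinitely often; the paper's diagonalization is less explicit but more flexible, since it applies verbatim to any countable family of infinite subsets of $\mathbb{N}$ (nothing about arithmetic structure is used), yielding the analogous non-existence result for any such family. Both are complete proofs of the stated theorem, and your use of the ultrafilter property (exactly one of $A$, $\mathbb{N}\setminus A$ lies in $p$) matches the paper's final step.
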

\begin{proof}
  For every $(a, b) \in \mathbb{N} \times \mathbb{N}$, let $\mathrm{AP}_{a,b} = \{a + nb: n \in \mathbb{N} \cup \{0\} \}$ which represents an arithmetic progression of infinite length with $a$ as the first item and $b$ as the tolerance. Then let $\mathcal{A} = \{\mathrm{AP}_{a,b}: (a, b) \in \mathbb{N} \times \mathbb{N} \}$. Note that the size of $\mathcal{A}$ is $\omega$, so that we enumerate $\mathcal{A} = \{l_1, l_2, \ldots, l_n, \ldots \}$. Next we construct two sequences $\langle x_n \rangle_{n=1}^\infty, \langle y_n \rangle_{n=1}^\infty$, both of which will meet every arithmetic progression of infinite length, but which have no intersection with each other. First pick two distinct points $x_1, y_1 \in l_1$. Assume $k \in \mathbb{N}$ and we have $\langle x_i \rangle_{i=1}^k, \langle y_i \rangle_{i=1}^k$ such that for each $n \leq k$, $x_n \notin \{x_i: i\in \{1, \ldots ,n\}\setminus \{n\}\} \cup \{y_i: i\in \{1, \ldots ,n\} \}$ and $y_n \notin \{x_i: i\in \{1, \ldots ,n\}\} \cup \{y_i: i\in \{1, \ldots ,n\}\setminus \{n\} \}$. Then we pick $x_{k+1} \in l_{k+1} \setminus \{x_1, \ldots, x_k, y_1, \ldots, y_k \}$. Then pick $y_{k+1} \in l_{k+1} \setminus \{x_1, \ldots, x_k, x_{k+1}, y_1, \ldots, y_k \}$.

  Finally, we have $A = \{x_n: n \in\mathbb{N} \}$, $B^\prime = \{y_n: n \in\mathbb{N} \}$. Let $B = \mathbb{N} \setminus A$. Note that $A \cap B^\prime = \emptyset$, so $B^\prime \subseteq B$. Now assume that there exists an ultrafilter $p \in \beta\mathbb{N}$ such that for each $A \in p$, $A$ contains an arithmetic progression of infinite length, then either $A$ or $B$ is in $p$. If $A \in p$, since $\mathcal{A}$ is the set of all arithmetic progressions of infinite length, there is some $l_n \subseteq A$, $n \in \mathbb{N}$. While by construction, $y_n \in l_n$ and $y_n \in B$, contradiction. Similarly, if $B \in p$, then there is some $l_m \subseteq B$, $m \in \mathbb{N}$, but $x_n \in l_n$ and $x_n \in A$, this is also a contradiction.
\end{proof}

By the above result, we immediately obtain the following corollary:
\begin{corollary}
  There exists a central set in $(\mathbb{N}, +)$ which contains no arithmetic progression of infinite length. 
\end{corollary}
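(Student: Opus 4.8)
The plan is to obtain the corollary as an immediate consequence of Theorem~\ref{infAP} together with the existence of minimal idempotents in $(\beta\mathbb{N}, +)$. First I would invoke the structure theory recalled in the introduction: $(\beta\mathbb{N}, +)$ is a compact right topological semigroup, so it has a smallest ideal $K(\beta\mathbb{N})$, and this smallest ideal contains idempotents. Hence a minimal idempotent exists; fix one such idempotent $p$.

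Next I would apply Theorem~\ref{infAP} to this particular $p$. Since $p$ is an ultrafilter on $\mathbb{N}$, the theorem guarantees that it cannot be the case that every member of $p$ contains an arithmetic progression of infinite length. Therefore there exists some $A \in p$ which contains no arithmetic progression of infinite length.

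Finally, I would unwind the definition of a central set: because $A \in p$ and $p$ is a minimal idempotent, $A$ is by definition central. Thus $A$ is a central set in $(\mathbb{N}, +)$ containing no arithmetic progression of infinite length, which is precisely the assertion of the corollary.

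There is essentially no substantive obstacle here, as the statement is a direct corollary once one reads off that membership in a single minimal idempotent already certifies centrality. The only point worth a word of care is the guaranteed existence of a minimal idempotent on which to run the argument; but this is a standard fact about the smallest ideal $K(\beta\mathbb{N})$ of the compact right topological semigroup $\beta\mathbb{N}$, already recalled in the introduction, so no additional work is required.
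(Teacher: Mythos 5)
Your proposal is correct and follows exactly the paper's own argument: fix a minimal idempotent $p$, apply Theorem~\ref{infAP} to extract some $A \in p$ containing no infinite arithmetic progression, and note that $A$ is central by definition. The only difference is that you spell out the existence of a minimal idempotent via the smallest ideal $K(\beta\mathbb{N})$, which the paper takes for granted.
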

\begin{proof}
  Let $p$ be a minimal idempotent in $(\beta\mathbb{N}, +)$. By Theorem \ref{infAP}, there exists some $A \in p$ containing no arithmetic progression of infinite length, meanwhile $A$ is central in $(\mathbb{N}, +)$.
\end{proof}

This idea can also be applied to the the following result, which can immediately deduce the Hales-Jewett Theorem (\cite[Corollary 14.8]{2012Algebra}). See \cite[Definition 1.3, Definition 14.6]{2012Algebra} for the definitions of free semigroups and variable words.
\begin{theorem}\label{HJ}
   Suppose $A$ is a nonempty finite alphabet, $S$ is the free semigroup over $A$ and $C$ is a central set in $S$. Then there is a variable word $w(v)$ such that $\{ w(a): a \in A \} \subseteq C$.
\end{theorem}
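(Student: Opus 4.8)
The plan is to pass to the larger free semigroup and transport the minimal idempotent witnessing the centrality of $C$ up to an ultrafilter concentrated on variable words. Let $T$ be the free semigroup over $A \cup \{v\}$, where $v \notin A$ is the variable, and let $W \subseteq T$ be the set of variable words (those in which $v$ occurs at least once). For each $a \in A$ let $\nu_a : T \to S$ be the substitution homomorphism sending $v \mapsto a$ and fixing every letter of $A$, so that $\nu_a(w) = w(a)$ for $w \in W$. Each $\nu_a$ extends to a continuous homomorphism $\tilde\nu_a : \beta T \to \beta S$, and I will repeatedly use the standard fact that $D \in \tilde\nu_a(q) \iff \nu_a^{-1}(D) \in q$.

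Two structural observations drive the argument. First, $W$ is a two-sided ideal of $T$ (concatenating any word with a variable word preserves an occurrence of $v$), so its closure $\overline{W}$ is a two-sided ideal of $\beta T$; in particular $\overline{W}$ is a closed subsemigroup and $W \in q$ for every $q \in \overline{W}$. Second, each $\nu_a$ restricts to the identity on $S$, so $\tilde\nu_a$ restricts to the canonical identity on $\overline{S} = \beta S \subseteq \beta T$. Consequently the diagonal homomorphism $\delta := (\tilde\nu_a)_{a \in A} : \beta T \to (\beta S)^{A}$ (into the product compact right topological semigroup) maps $\beta S$ onto the diagonal, and in particular sends the chosen minimal idempotent $p$ with $C \in p$ to $\vec{p} := (p, \dots, p)$.

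The crux is to produce a single $q \in \overline{W}$ with $\delta(q) = \vec{p}$, i.e.\ with $\tilde\nu_a(q) = p$ simultaneously for \emph{every} $a \in A$; this simultaneity over all letters is exactly the combinatorial content of Hales--Jewett and is the main obstacle, since a generic minimal idempotent of $\beta T$ need not hit our specific $p$ under each substitution. I plan to obtain $q$ purely algebraically. Because the partial order on idempotents of a product is coordinatewise and $p$ is minimal in $\beta S$, $\vec{p}$ is a minimal idempotent of $(\beta S)^{A}$; as $\vec{p}$ lies in the closed subsemigroup $\delta(\beta T)$, it remains minimal there, so $\vec{p} \in K(\delta(\beta T))$. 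Since $\delta$ is a surjective homomorphism onto $\delta(\beta T)$ and $\overline{W}$ is an ideal of $\beta T$, its image $\delta(\overline{W})$ is a closed ideal of $\delta(\beta T)$ and hence contains $K(\delta(\beta T))$. Therefore $\vec{p} \in \delta(\overline{W})$, which yields the desired $q \in \overline{W}$.

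Finally I would extract the variable word. From $\tilde\nu_a(q) = p$ and $C \in p$ we get $\nu_a^{-1}(C) \in q$ for each $a \in A$, while $W \in q$; since $A$ is finite, $W \cap \bigcap_{a \in A}\nu_a^{-1}(C) \in q$ and is in particular nonempty. Any $w$ in this set is a variable word satisfying $w(a) = \nu_a(w) \in C$ for all $a \in A$, as required. The technical point I would verify most carefully is the lemma used in the third paragraph—that an idempotent lying in $K$ of a compact right topological semigroup and also in a closed subsemigroup is minimal in that subsemigroup—which I will justify through the order characterization of minimal idempotents.
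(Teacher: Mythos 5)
Your proposal is correct, but it is a genuinely different route from the paper's, because the paper does not prove Theorem~\ref{HJ} at all: it dismisses it in one line as a direct corollary of Theorem~14.7 of Hindman--Strauss. What you have done is reconstruct, from scratch, the algebraic proof behind that cited theorem, and your reconstruction deviates from the textbook argument at the crux. Both proofs rest on the same two structural facts you isolate ($W$ is a two-sided ideal of $T$, so $\overline{W}$ is a closed two-sided ideal of $\beta T$ and hence contains $K(\beta T)$; and each substitution $\nu_a$ fixes $S$, so $\tilde\nu_a$ is the identity on $\beta S \subseteq \beta T$). But where the standard proof picks a minimal idempotent $q \in \beta T$ with $q \leq p$ (which exists below any idempotent) and uses that continuous homomorphisms preserve the order on idempotents --- so that $\tilde\nu_a(q) \leq \tilde\nu_a(p) = p$ forces $\tilde\nu_a(q) = p$ by minimality of $p$ in $\beta S$ --- you instead pass to the product $(\beta S)^A$ via the diagonal $\delta$, observe that $\vec{p}$ is a minimal idempotent there (the order is coordinatewise), that it stays minimal in the closed subsemigroup $\delta(\beta T)$ (this is exactly the fact that a compact subsemigroup meeting $K$ of the ambient semigroup has $K(T) = T \cap K(S)$, Theorem~1.65 in Hindman--Strauss, or your order argument), and that $\delta(\overline{W})$ is an ideal of $\delta(\beta T)$ and so contains its smallest ideal, yielding a preimage $q \in \overline{W}$ with $\delta(q) = \vec{p}$. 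Every step checks out, including the final extraction, which only uses $W \cap \bigcap_{a \in A}\nu_a^{-1}(C) \in q$ and finiteness of $A$; note that your $q$ need not be idempotent, which the argument never requires. What each approach buys: the paper's citation buys brevity; the textbook's $q \leq p$ argument is leaner, needing only order-preservation under homomorphisms; your diagonal version trades that lemma for the product machinery and the $K$-of-closed-subsemigroup fact, at the cost of slightly heavier apparatus but with the mild conceptual gain of packaging the simultaneity over all letters $a \in A$ into a single minimality statement in $(\beta S)^A$.
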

\begin{proof}
  It is a direct corollary of \cite[Theorem 14.7]{2012Algebra}.
\end{proof}

\begin{theorem}\label{HJplus}
  Suppose $A$ is a nonempty finite alphabet, $S$ is the free semigroup over $A$, $p$ is a minimal idempotent in $\beta S$ and $\langle C_n \rangle_{n=1}^\infty$ is a sequence in $p$. Then there exists a sequence $\langle w_n(v) \rangle_{n=1}^\infty$ of variable words over $A$ such that for each $t \in \mathbb{N}$, $Y_t \subseteq C_t$ and for each $F \in \mathcal{P}_f(\mathbb{N})$, $\sum_{t \in F}Y_t \subseteq C_{\min F}$, where $Y_t = \{w_t(a): a \in A \}$. 
\end{theorem}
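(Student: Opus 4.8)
The plan is to follow the inductive scheme of the proof of Theorem~\ref{vdw} essentially verbatim, with Theorem~\ref{HJ} playing the role that the arithmetic-progression property played there, and with one genuine modification forced by the fact that the free semigroup $S$ is noncommutative. As before, I would work throughout with the starred sets $C_n^\star$ rather than the $C_n$ themselves: since $p$ is a minimal idempotent and $C_n \in p$, each $C_n^\star$ lies in $p$ and is therefore central, so Theorem~\ref{HJ} applies to it and, by \cite[Lemma 4.14]{2012Algebra}, $s^{-1}C_n^\star \in p$ whenever $s \in C_n^\star$.

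The construction proceeds by induction on $n$, producing the variable words $w_n(v)$ one at a time and maintaining the hypothesis that $Y_k \subseteq C_k^\star$ for each $k \le n$ and $\sum_{k \in F} Y_k \subseteq C_{\min F}^\star$ for every nonempty $F \subseteq \{1, \ldots, n\}$. For the base case I would apply Theorem~\ref{HJ} to the central set $C_1^\star$ to obtain $w_1(v)$ with $Y_1 \subseteq C_1^\star$. For the inductive step, set $M_k = \bigcup\{\sum_{t \in F} Y_t : \emptyset \neq F \subseteq \{1, \ldots, n\}$ and $\min F = k\}$ for each $k \le n$; each $M_k$ is finite (as $A$ is finite, every $Y_t$ has at most $|A|$ elements) and $M_k \subseteq C_k^\star$ by the inductive hypothesis.

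The one place where noncommutativity enters is the definition of the thinned target set. Because the blocks of $\sum_{k \in F} Y_k$ are concatenated in increasing order of index, the new block coming from $Y_{n+1}$ is always appended on the \emph{right} of every partial product $a \in M_k$; hence the correct shift is the left quotient, and I would set $C = C_{n+1}^\star \cap \bigcap_{k=1}^{n}\bigcap_{a \in M_k} a^{-1}C_k^\star$ rather than an additive translate of the form $-a + C_k^\star$. Since each $a \in M_k \subseteq C_k^\star$ gives $a^{-1}C_k^\star \in p$ and the intersection is finite, $C \in p$, so $C$ is central; applying Theorem~\ref{HJ} to $C$ produces $w_{n+1}(v)$ with $Y_{n+1} \subseteq C \subseteq C_{n+1}^\star$.

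Finally I would verify that the hypothesis is preserved. The only cases requiring attention are those $F \subseteq \{1,\ldots,n+1\}$ with $n+1 \in F$: if $F = \{n+1\}$ then $\sum_{k\in F} Y_k = Y_{n+1} \subseteq C \subseteq C_{n+1}^\star$, and if $|F| > 1$ then writing a typical element as $x = a\cdot b$ with $a \in \sum_{k \in F \setminus \{n+1\}} Y_k \subseteq M_{\min F}$ and $b \in Y_{n+1} \subseteq C \subseteq a^{-1}C_{\min F}^\star$ gives $x = a \cdot b \in C_{\min F}^\star$. The main obstacle, and essentially the only respect in which the argument differs from Theorem~\ref{vdw}, is keeping the handedness straight: one must use the left quotient $a^{-1}C_k^\star$ and exploit that the largest index always concatenates last, so that $a$, the left partial product, is the element already lying in $C_k^\star$, while $b$, the freshly chosen block, is the one required to lie in the quotient.
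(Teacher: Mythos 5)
Your proposal is correct and follows exactly the route the paper intends: its proof of Theorem~\ref{HJplus} is just the inductive scheme of Theorem~\ref{vdw} with Theorem~\ref{HJ} supplying the monochromatic set $\{w(a): a \in A\}$ at each stage. Your one substantive addition --- replacing the additive translates $-a + C_k^\star$ by the quotients $a^{-1}C_k^\star = \{y \in S : ay \in C_k^\star\}$ and checking that the new block is always concatenated on the right --- is precisely the adjustment the paper leaves implicit in the phrase ``nearly the same,'' and you have the handedness right.
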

\begin{proof}
  The proof is nearly the same as that of Theorem \ref{vdw} except that we need to apply Theorem \ref{HJ}.
\end{proof}

Also, one of the key that the proof of Theorem \ref{HJplus} can go through is that each $Y_t$ is finite, which is due to the condition ``$A$ is a nonempty finite alphabet''. So it is natural to ask whether Theorem \ref{HJplus} still holds if the alphabet is infinite. The answer is ``No'' like that to the question with respect to Theorem \ref{vdw}. Actually the Hales-Jewett theorem fails when the alphabet is infinite. It may be a known result, here we provide a proof.
\begin{theorem}
  If $A$ is an alphabet of size $\kappa \geq \omega$, $S$ is the free semigroup over $A$. Then there exists a finite coloring on $S$ such that no variable word $w(v)$ let $\{w(a): a \in A \}$ be monochromatic.
\end{theorem}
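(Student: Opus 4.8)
The plan is to exhibit an explicit $2$-coloring. First I would observe that to break a line $\{w(a):a\in A\}$ it suffices to produce two letters on which the coloring disagrees, and that any such pair may be taken from a fixed countably infinite subset $A_0=\{a_i:i\in\mathbb{N}\}\subseteq A$ (which exists since $\kappa\geq\omega$, with the $a_i$ distinct); this reduction is exactly what lets one uniform argument handle every $\kappa$. The coloring I would use records only the parity of the largest $A_0$-index occurring in a word: define $\mathrm{col}(u)=\max\{i:a_i\text{ occurs in }u\}\bmod 2$ whenever some letter of $A_0$ occurs in $u$, and $\mathrm{col}(u)=0$ otherwise. This uses two colors, so it is certainly a finite coloring of $S$.

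Next I would verify that no variable word produces a monochromatic line. Fix a variable word $w(v)$ and let $N$ be the largest index $i$ such that $a_i$ appears among the (finitely many) constant letters of $w$, setting $N=0$ if no letter of $A_0$ occurs as a constant. For every $i>N$ the substituted word $w(a_i)$ contains $a_i$, since $v$ occurs in $w$, while every $A_0$-letter coming from a constant has index at most $N<i$; hence the largest $A_0$-index occurring in $w(a_i)$ is exactly $i$, so $\mathrm{col}(w(a_i))=i\bmod 2$. Taking $i=N+1$ and $i=N+2$ then gives $\mathrm{col}(w(a_{N+1}))\neq\mathrm{col}(w(a_{N+2}))$, so $\{w(a):a\in A\}$ contains two differently colored words and is not monochromatic. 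As $w$ was arbitrary, this is the desired coloring.

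The step I expect to be the main obstacle is arranging \emph{finiteness}, because the occurrences of $v$ can form an arbitrary nonempty set of positions inside arbitrarily long words, and the obvious colorings fail. Coloring by the letter in a fixed position, or by the first or last letter, only breaks lines whose variable sits in that position, and these cannot be aggregated over all positions without using infinitely many colors. Likewise any parity-type sum $\sum_j\chi(u_j)$, or finitely many such sums, is linear in the indicator $\mathbf{1}_P$ of the variable set $P$ and therefore reduces on a line to a constant plus $|P|\chi(a)\bmod 2$; finitely many such functionals all vanish on every $\mathbf{1}_P$ lying in a fixed subspace of finite codimension, and in particular such a coloring does nothing for the all-variable words $v^k$ with $k$ even. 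The maximal-index device circumvents this: it is a genuinely nonlinear, position-independent quantity, and the freedom to substitute a letter $a_i$ whose index exceeds every constant of $w$ forces that quantity to equal $i$, after which the infinitude of $A_0$ supplies two admissible indices of opposite parity.
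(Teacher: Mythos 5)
Your proof is correct, and it rests on the same central device as the paper's: color each word by the parity of its maximal letter-index, and use the fact that a variable word has only finitely many constant letters, so substituting a letter of index larger than every constant forces the maximum to equal the substituted index, after which two admissible indices of opposite parity break the line. The difference is in how parity is defined on an alphabet of size $\kappa$. The paper identifies $A$ with the cardinal $\kappa$ and colors by the parity of $\max(\mathrm{ran}(s))$ over the entire well-ordered alphabet, which forces it to invoke Cantor's Normal Form Theorem to assign a parity to every ordinal $\alpha < \kappa$ (declaring $\alpha$ odd exactly when its normal form ends in a term $\omega^{0}\cdot k_n$ with $k_n$ odd), precisely because limit ordinals carry no natural parity. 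You sidestep the ordinal arithmetic entirely: by fixing one countably infinite subalphabet $A_0 = \{a_i : i \in \mathbb{N}\}$, indexing only its letters, and defaulting every word containing no $A_0$-letter to color $0$, you need just the indices $N+1$ and $N+2$ of opposite parity in the tail beyond the constants of $w$. This makes your argument more elementary --- no Cantor normal form, and no well-ordering of all of $A$ beyond extracting a countable subset --- at the cosmetic cost of a coloring that ignores most of the alphabet; the paper's coloring is canonical with respect to the well-order of $A$ but buys nothing extra for this theorem. Your closing paragraph on why position-based and linear parity-sum colorings fail is a reasonable sanity check but is not needed for the proof itself.
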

\begin{proof}
  For convenience, let $A = \kappa$. By Cantor's Normal Form Theorem (\cite[Theorem 2.26]{2003Set}), every nonzero ordinal $\alpha < \kappa$ can be represented uniquely in the form $\alpha = \sum_{i=1}^{n}\omega^{\beta_i} \cdot k_i$, where $n \geq 1$, $\alpha \geq \beta_1 > \ldots > \beta_n$ and $k_1, \ldots, k_n$ are nonzero natural numbers. For $\alpha < \kappa$, we say $\alpha$ is odd if $\alpha > 0$, $\beta_n = 0$ and $k_n$ is odd, otherwise we say it is even.
  
  Now define $c: S \rightarrow 2$ by setting $c(s) = 0$ if $\max(\mathrm{ran}(s))$ is even and $c(s) = 1$ otherwise for each $s \in S$. Then for any variable word $w(v)$, since it has finite length, $\mathrm{ran}(w(v)) \setminus \{ v \}$ is finite. Hence when $\kappa > \xi \geq \max(\mathrm{ran}(w(v)) \setminus \{ v \})$, we have $\max(\mathrm{ran}(w(\xi))) = \xi$. While $\xi$ may be even or not, it turns out that $\{w(a): a \in A \}$ is not monochromatic. 
\end{proof}

\section{Further Generalizations}
In this section, we will extend Phulara's result\cite[Theorem 2.6]{2015Ag} further. First we consider commutative semigroups.
\begin{theorem}\label{FG}
  Suppose $(S, +)$ is a commutative semigroup, $r \in J(S)$ is an idempotent and $R: \mathcal{P}_f({^\mathbb{N}}S) \rightarrow r$ is a function. Then there exist functions $\alpha: \mathcal{P}_f({^\mathbb{N}}S) \rightarrow S$ and $H: \mathcal{P}_f({^\mathbb{N}}S) \rightarrow \mathcal{P}_f(\mathbb{N})$ such that
  \begin{enumerate}
    \item if $F, G \in \mathcal{P}_f({^\mathbb{N}}S)$ and $F \subsetneq G$, then $\max H(F) < \min H(G)$, and
    \item if $m \in \mathbb{N}$, $G_1, \ldots, G_m \in \mathcal{P}_f({^\mathbb{N}}S)$, $G_1 \subsetneq \ldots \subsetneq G_m$ and $f_i \in G_i$ for each $i \in \{1, \ldots, m\}$, then $\sum_{i=1}^{m}(\alpha(G_i) + \sum_{t \in H(G_i)}f_i(t)) \in R(G_1)$.
  \end{enumerate}
\end{theorem}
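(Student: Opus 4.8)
The plan is to run the inductive construction that proves the strong Central Sets Theorem (\cite{2008A}, and \cite[Theorem 14.8.4]{2012Algebra}), but to thread the whole family $\{R(G): G \in \mathcal{P}_f({^\mathbb{N}}S)\}$ through the recursion in place of a single target set, which is exactly Phulara's device of replacing one C-set by many. I would define $\alpha(F)$ and $H(F)$ by induction on the cardinality $|F|$, so that every proper subset of $F$ is treated before $F$ itself. The key is to prove a strengthened invariant: that the sum in clause (2) lands not merely in $R(G_1)$ but in $R(G_1)^\star$. Since $R(G_1)^\star \subseteq R(G_1)$ this yields (2), and it is precisely the $\star$-membership that makes the recursion self-sustaining, via the fact (recalled in the introduction) that $-w + R(G_1)^\star \in r$ whenever $w \in R(G_1)^\star$.

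For the inductive step, suppose $\alpha(G)$ and $H(G)$ are defined for all $G$ with $|G| < |F|$ and the invariant holds for every chain whose top has cardinality $< |F|$. I would first put $m_F = \max\bigl(\{0\} \cup \bigcup\{H(G): \emptyset \neq G \subsetneq F\}\bigr)$, a natural number because there are finitely many proper subsets; demanding $\min H(F) > m_F$ will secure clause (1). Next I would gather every constraint arising from a chain ending in $F$: for each chain $G_1 \subsetneq \cdots \subsetneq G_{k-1} \subsetneq F$ (with $k \geq 2$) and each choice $f_i \in G_i$ for $i < k$, the partial sum $w = \sum_{i=1}^{k-1}\bigl(\alpha(G_i) + \sum_{t \in H(G_i)} f_i(t)\bigr)$ lies, by the invariant applied to the truncated chain (whose top $G_{k-1}$ has cardinality $< |F|$), in $R(G_1)^\star$, so $-w + R(G_1)^\star \in r$. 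Letting $W$ be the finite set of all such pairs $(w, G_1)$, I would form
\[
  B = R(F)^\star \cap \bigcap_{(w, G_1) \in W} \bigl(-w + R(G_1)^\star\bigr).
\]
As $R(F)^\star \in r$, each $-w + R(G_1)^\star \in r$, and $W$ is finite, we get $B \in r$; since $r \in J(S)$, $B$ is a $J$-set.

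I would then apply the $J$-set property to the finite family $F$, in the sharpened form permitting $\min H > m_F$ (obtained by shifting every sequence in $F$ by $m_F$ before invoking the definition of $J$-set), to get $a \in S$ and $H \in \mathcal{P}_f(\mathbb{N})$ with $\min H > m_F$ and $a + \sum_{t \in H} f(t) \in B$ for all $f \in F$. Setting $\alpha(F) = a$ and $H(F) = H$ gives clause (1) at once. For the invariant: a chain ending in $F$ with $k = 1$ gives $a + \sum_{t \in H} f(t) \in B \subseteq R(F)^\star$; and one with $k \geq 2$ and partial sum $w$ gives $a + \sum_{t \in H} f_k(t) \in B \subseteq -w + R(G_1)^\star$, whence the full sum $w + a + \sum_{t \in H} f_k(t) \in R(G_1)^\star$. (The base case $|F| = 1$ is the instance $W = \emptyset$, $B = R(F)^\star$.)

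The single delicate point — the only place this differs from the classical argument — is the bookkeeping that pairs each partial sum $w$ with the correct target $R(G_1)$. The construction succeeds exactly because truncating a chain $G_1 \subsetneq \cdots \subsetneq G_k = F$ from the top leaves its bottom element $G_1$ fixed, so that the partial sum and its completion are both measured against the same set $R(G_1)$; this is what licenses $-w + R(G_1)^\star \in r$ and keeps $B$ inside $r$. I expect the main obstacle to be organizational rather than conceptual: checking that the finitely many constraints collected into $B$ exhaust all chains ending in $F$ and all admissible tuples $(f_i)$, and that the induction on $|F|$ guarantees every truncated chain has already been handled. Beyond the $\star$-operation and the $J$-set property no new ingredient is required.
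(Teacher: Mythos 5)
Your proposal is correct and takes essentially the same route as the paper's proof: induction on $|F|$ with the strengthened invariant that every chain-sum lands in $R(G_1)^\star$, a finite intersection $B$ of $R(F)^\star$ with the translates $-w + R(G_1)^\star$, and one application of the $J$-set property producing $a$ and $H(F)$ with $\min H(F)$ large. Your set $W$ of pairs $(w, G_1)$ is the paper's family $\{M_G : \emptyset \neq G \subsetneq F\}$ in different bookkeeping, and the sharpened $J$-set statement you derive by shifting the sequences is exactly what the paper invokes as \cite[Lemma 14.8.2]{2012Algebra}.
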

\begin{proof}
  We define $\alpha$ and $H$ by induction on the size of $F \in \mathcal{P}_f({^\mathbb{N}}S)$ satisfying:
  \begin{enumerate}
    \item[(i)] if $\emptyset \neq G \subsetneq F$, then $\max H(G) < \max H(F)$, and
    \item[(ii)] if $m \in \mathbb{N}$, $G_1, \ldots, G_m \in \mathcal{P}_f({^\mathbb{N}}S)$, $\emptyset \neq G_1 \subsetneq \ldots \subsetneq G_m = F$ and $f_i \in G_i$ for each $i \in \{1, \ldots, m\}$, then $\sum_{i=1}^{m}(\alpha(G_i) + \sum_{t \in H(G_i)}f_i(t)) \in R(G_1)^\star$.
  \end{enumerate}
  First assume $F = \{ f \}$. Then since $R(F) \in r$, we have $R(F)^\star \in r$, hence $R(F)^\star$ is a J-set, then we pick some $a_0 \in S$ and $h_0 \in \mathcal{P}_f(\mathbb{N})$ such that
                               \[ a_0 + \sum_{t \in h_0}f(t) \in R(F)^\star. \]
  Then let $\alpha(F) = a_0$ and $H(F) = h_0$. So both hypothesis are satisfied.
  
  Now assume $|F| > 1$ and $\alpha(G)$ and $H(G)$ have been defined for all $G \subsetneq F$ so that the inductive hypothesis are satisfied. Let $K = \bigcup \{H(G): \emptyset \neq G \subsetneq F \}$, let $k = \max K$. Then for each nonempty $G \subsetneq F$, define $M_G = \{\sum_{i=1}^{m}(\alpha(G_i) + \sum_{t \in H(G_i)}f_i(t)): m \in \mathbb{N}, G = G_1 \subsetneq \ldots\subsetneq G_m \subsetneq F$ and for each $i \in \{ 1, \ldots, m \}, f_i \in G_i \}$. Note that each $M_G$ is finite and $M_G \subseteq R(G)^\star$ by hypothesis. Let \[
  A = R(F)^\star \cap \bigcap_{\emptyset \neq G \subsetneq F}\bigcap_{x \in M_G}(-x + R(G)^\star).   \]
  It is easy to see that $A \in r$. Then by \cite[Lemma 14.8.2]{2012Algebra}, we pick $a \in S$ and $h \in \mathcal{P}_f(\mathbb{N})$ with $\min h > k$ such that for each $f \in F$, $a + \sum_{t \in h}f(t) \in A$. Then let $\alpha(F) = a$ and let $H(F) = h$.
  
  Now let us verify that (i) and (ii) holds for $F$. Since $\min h > k$, (i) holds. Now take $m \in \mathbb{N}$, $G_1, \ldots, G_m \in \mathcal{P}_f({^\mathbb{N}}S)$, $\emptyset \neq G_1 \subsetneq \ldots \subsetneq G_m = F$ and $f_i \in G_i$ for each $i \in \{1, \ldots, m\}$. If $m = 1$, then $G_1 = F$, so $\sum_{i=1}^{m}(\alpha(G_i) + \sum_{t \in H(G_i)}f_i(t)) = \alpha(G_1) + \sum_{t \in H(G_1)}f_1(t) = a + \sum_{t \in h}f_1(t) \in A \in R(F)^\star =  R(G_1)^\star$. Otherwise, $m > 1$. Let $y = \sum_{i=1}^{m-1}(\alpha(G_i) + \sum_{t \in H(G_i)}f_i(t))$, so $y \in M_{G_1}$. Then $a + \sum_{t \in h}f_m(t) \in A \subseteq -y + R(G_1)^\star$, it turns out $\sum_{i=1}^{m}(\alpha(G_i) + \sum_{t \in H(G_i)}f_i(t)) = y + (a + \sum_{t \in h}f_m(t)) \in  R(G_1)^\star$, so (ii) holds.
\end{proof}

Actually this kind of generalization also works in noncommutative semigroups. And the proof is essentially the same as that in commutative semigroups, so we omit it. See \cite[Definition 3.1, Definition 3.3]{2015Ag} for the definition of $\mathcal{J}_m$ for $m \in \mathbb{N}$ and $x(m, a, t, f)$ for $m \in \mathbb{N}$, $a \in S^{m+1}$, $t \in \mathcal{J}_m$ and $f \in \mathcal{P}_f({^\mathbb{N}}S)$.
\begin{theorem}\label{noncom}
  Suppose $(S, \cdot)$ is a semigroup, $r$ is an idempotent in $J(S)$ and $R: \mathcal{P}_f({^\mathbb{N}}S) \rightarrow r$ is a function. Then there exist $m \in \mathcal{P}_f({^\mathbb{N}}S) \rightarrow \mathbb{N}$, $\alpha \in \times_{F \in \mathcal{P}_f({^\mathbb{N}}S)}S^{m(F) + 1}$ and $\tau \in \times_{F \in \mathcal{P}_f({^\mathbb{N}}S)}\mathcal{J}_{m(F)}$ such that 
  \begin{enumerate}
    \item if $F, G \in \mathcal{P}_f({^\mathbb{N}}S)$ and $F \subsetneq G$, then $\tau(F)(m(F)) < \tau(G)(1)$, and
    \item if $m \in \mathbb{N}$, $G_1, \ldots, G_m \in \mathcal{P}_f({^\mathbb{N}}S)$, $G_1 \subsetneq \ldots \subsetneq G_m$ and $f_i \in G_i$ for each $i \in \{1, \ldots, m\}$, then $\prod_{i=1}^{m}x(m(G_i), \alpha(G_i), \tau(G_i), f_i) \in R(G_1)$.
  \end{enumerate}
\end{theorem}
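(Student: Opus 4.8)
The plan is to follow the inductive construction of Theorem~\ref{FG} almost verbatim, making the two substitutions dictated by noncommutativity. The commutative extraction $a+\sum_{t\in h}f(t)\in A$ is replaced by the defining property of a J-set, which furnishes $m\in\mathbb{N}$, $a\in S^{m+1}$ and $t\in\mathcal{J}_m$ with $x(m,a,t,f)\in A$ for every $f$ in a prescribed finite set; and each additive left translate $-x+R(G)^\star$ is replaced by the left quotient $x^{-1}R(G)^\star=\{y:xy\in R(G)^\star\}$. Concretely, I would define $m(F)$, $\alpha(F)$ and $\tau(F)$ by induction on $|F|$, maintaining the invariants (i) if $\emptyset\neq G\subsetneq F$ then $\tau(G)(m(G))<\tau(F)(1)$, and (ii) whenever $\emptyset\neq G_1\subsetneq\ldots\subsetneq G_n=F$ and $f_i\in G_i$ for each $i$, the product $\prod_{i=1}^{n}x(m(G_i),\alpha(G_i),\tau(G_i),f_i)$ lies in $R(G_1)^\star$.

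For $F=\{f\}$ I use that $R(F)^\star\in r$, hence is a J-set, and apply its defining property to the singleton $\{f\}$ to obtain $m(F),\alpha(F),\tau(F)$ with $x(m(F),\alpha(F),\tau(F),f)\in R(F)^\star$. For $|F|>1$, with everything defined on proper subsets, I set $k=\max\{\tau(G)(m(G)):\emptyset\neq G\subsetneq F\}$ and, for each nonempty $G\subsetneq F$, collect the finite set $M_G$ of all products $\prod_{i=1}^{n}x(m(G_i),\alpha(G_i),\tau(G_i),f_i)$ over chains $G=G_1\subsetneq\ldots\subsetneq G_n\subsetneq F$ with $f_i\in G_i$; invariant (ii) for the sets $G_n\subsetneq F$ gives $M_G\subseteq R(G)^\star$. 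I then form $A=R(F)^\star\cap\bigcap_{\emptyset\neq G\subsetneq F}\bigcap_{x\in M_G}x^{-1}R(G)^\star$. Since $r$ is idempotent and each $x\in M_G\subseteq R(G)^\star$, the fact recalled from \cite[Lemma 4.14]{2012Algebra} yields $x^{-1}R(G)^\star\in r$; as this is a finite intersection of members of $r$, we get $A\in r$, and in particular $A$ is a J-set because $r\in J(S)$.

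The one genuinely new point is forcing the sampling indices of $F$ to exceed $k$, which is what invariant (i), and ultimately conclusion (1), demands. I would achieve this by a shift: apply the J-set property of $A$ to the pre-shifted functions $g_f$ defined by $g_f(j)=f(j+k)$ for $f\in F$, obtaining $m',a',t'$ with $x(m',a',t',g_f)\in A$ for all $f\in F$, and then set $m(F)=m'$, $\alpha(F)=a'$ and $\tau(F)=t'+k$ coordinatewise. Because $x(m',a',t'+k,f)=x(m',a',t',g_f)$, each such word lies in $A$, while $\tau(F)(1)=t'(1)+k>k$ gives (i). Invariant (ii) then splits on the chain length: for $n=1$ the product equals a word in $A\subseteq R(F)^\star$; for $n>1$ I write it as $y\cdot z$ where $y=\prod_{i=1}^{n-1}x(m(G_i),\alpha(G_i),\tau(G_i),f_i)\in M_{G_1}$ and $z=x(m(F),\alpha(F),\tau(F),f_n)\in A\subseteq y^{-1}R(G_1)^\star$, so that $y\cdot z\in R(G_1)^\star$. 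Reading (i) off for $F\subsetneq G$ proves conclusion (1), and applying (ii) with $F=G_m$ together with $R(G_1)^\star\subseteq R(G_1)$ proves conclusion (2).

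I expect the main obstacle to be purely organizational: respecting the multiplication order, so that the previously built block $y$ always stays on the left and the freshly extracted word $z$ on the right, which is exactly what makes the left quotient $x^{-1}R(G)^\star$, rather than a right translate, the correct object to intersect; and checking that shifting $t$ by $k$ is undone precisely by pre-shifting the test functions, so that the indexing condition (1) can be met without disturbing membership in $A$. Once these alignments are pinned down, the verification is a line-by-line transcription of the proof of Theorem~\ref{FG}.
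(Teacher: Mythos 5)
Your proposal is correct and is essentially the paper's intended argument: the paper omits the proof of Theorem~\ref{noncom}, stating only that it is ``essentially the same'' as that of Theorem~\ref{FG}, and your line-by-line transcription (left quotients $x^{-1}R(G)^\star$ in place of $-x+R(G)^\star$, products $x(m,a,t,f)$ in place of $a+\sum_{t\in h}f(t)$, with the fresh block always multiplied on the right) is exactly that transcription. Your shift device $g_f(j)=f(j+k)$, $\tau(F)=t'+k$ correctly supplies the noncommutative analogue of the $\min h>k$ refinement that the commutative proof imported from \cite[Lemma 14.8.2]{2012Algebra}, so the argument is complete.
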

 
\section*{Acknowledgements}
   I acknowledge support by NSFC No. 12401002.

\bibliographystyle{plain}

\end{document}